\def\blfootnote{\xdef\@thefnmark{}\@footnotetext}
\long\def\symbolfootnote[#1]#2{\begingroup%
\def\thefootnote{\fnsymbol{footnote}}\footnote[#1]{#2}\endgroup}
\newcommand{\Caption}[2][Figure]{{\caption[{#1}]{\footnotesize {#2}}}}
\newcommand{\defn}[1]{{\textup{\textbf{#1}}}}
\newcommand{\rank}[1]{{\textup{rank}(#1)}}
\newcommand{\R}{\mathbb{R}}
\begin{document}

\vfill

\begin{center}
{\Large Characteristics of Optimal Solutions to the Sensor Location Problem}

\singlespacing \vspace{1in} David. R. Morrison

University of Illinois at Urbana-Champaign

Champaign, IL

\vspace{0.5in}

Susan. E. Martonosi

Harvey Mudd College

301 Platt Blvd.

Claremont, CA 91711

(909) 607-0481

martonosi@hmc.edu

\vspace{1 in}

Submitted for publication on October 3, 2010

\end{center}

\vfill

\noindent \textbf{Abstract:} In~\cite{odmatrixest}, the authors present the Sensor Location Problem: that of locating the minimum number of traffic sensors at intersections of a road network such that the traffic flow on the entire network can be determined.  They offer a necessary and sufficient condition on the set of monitored nodes in order for the flow everywhere to be determined.  In this paper, we present a counterexample that demonstrates that the condition is not actually sufficient (though it is still necessary).  We present a stronger necessary condition for flow calculability, and show that it is a sufficient condition in a large class of graphs in which a particular subgraph is a tree.  Many typical road networks are included in this category, and we show how our condition can be used to inform traffic sensor placement.

\newtheorem{combcutset}{Definition}[section]
\newtheorem{slpdef}[combcutset]{Definition}
\newtheorem{badslpthm}{Theorem}[section]
\newtheorem{boundvertex}{Definition}[section]
\newtheorem{adjvert}[boundvertex]{Definition}
\newtheorem{turnfactor}[boundvertex]{Definition}
\newtheorem{adjmat}{Definition}[section]
\newtheorem{incidmat}[adjmat]{Definition}
\newtheorem{linind}[adjmat]{Theorem}
\newtheorem{reducedlinind}[adjmat]{Theorem}
\newtheorem{canmakesquare}[adjmat]{Theorem}
\newtheorem{canmakesquarecor}[adjmat]{Corollary}
\newtheorem{bpath}{Definition}[section]
\newtheorem{slpconj}[bpath]{Conjecture}
\newtheorem{revslpthm}[bpath]{Theorem}
\newtheorem{slpdecdef}{Definition}[section]
\newtheorem{domset}[slpdecdef]{Definition}
\newtheorem{npcomplete}[slpdecdef]{Theorem}
\newtheorem{treethmcount}{Definition}[section]
\newtheorem{treethm}[treethmcount]{Theorem}
\newtheorem{treecor}[treethmcount]{Corollary}
\newtheorem{treelemma}[treethmcount]{Lemma}
\newtheorem{treelemma2}[treethmcount]{Lemma}
\doublespacing

\section{Introduction}

Traffic congestion is a significant problem in most major cities in the world.  An important first step in mitigating road congestion is to know the distribution of cars on each road of the network.  This can be achieved by using traffic sensors to count cars traveling into and out of an intersection. However, placing sensors on every intersection is not only prohibitively expensive, it is also inefficient:  if some sensors were removed, traffic flow through those intersections might still be calculated by applying flow conservation laws and knowledge of the fraction of cars turning in each direction at each intersection.  In fact, even a vertex cover is inefficient for these reasons.  Thus, we want to locate the minimum number of sensors such that we can still determine the distribution of cars in the entire
network.  This problem was introduced in \cite{odmatrixest} and named the Sensor Location Problem, (SLP).

\cite{odmatrixest} present a necessary and sufficient condition on the set $M$ of monitored intersections such that the traffic flow on the entire network is calculable.  We present a counterexample demonstrating that the condition, while necessary, is not sufficient.  Using the insights provided by the counterexample, we develop a stronger necessary condition that, while not sufficient in general, is sufficient in a large class of networks in which a particular \textit{unmonitored subgraph}, to be defined in this paper, is a tree.  We present several examples of road networks, including the standard grid network, to which this sufficient condition can be used to confirm that the flow can be completely specified.  Moreover we present examples for which the condition is \textit{not} sufficient, but where the failure of the necessary condition also provides useful information about the network.

First, we review the terminology and notation used in \cite{odmatrixest}.  Then, we present our counterexample in Section \ref{sec:slpexs}, and develop a matrix representation for the problem in Section \ref{sec:slpmatrix}.  Section \ref{sec:neccond} derives a graph-theoretic necessary condition for flow calculability, and Section \ref{sec:tree} demonstrates that this condition is sufficient in the case when each unmonitored subgraph is a tree.  In Section \ref{sec:examples} we provide examples of how this new condition could be used for decision support by traffic engineers.  We offer concluding remarks in Section \ref{sec:conc}.

\section{Definitions}\label{sec:defs}

  Let the road network be represented by a directed graph $G = (V,A)$, where $V$ is a set of intersections and $A$ is a set of ``two-way'' directed arcs (roads).  That is, if $u,v \in V$ and  $uv \in A$, then $vu  \in A$, but the traffic flow on arc $uv$ need not equal that on arc $vu$.  We represent the traffic flowing over the roads by a network flow
function $f:A \to \R$ that satisfies the flow conservation law at each vertex
$v \in V$:

\begin{equation}\label{eqn:fcl}
\sum_{e \in v^-} f_e - \sum_{e \in v^+} f_e + S_v = 0,
\end{equation}
\noindent
where $v^-$ is the set of arcs with head at $v$,  $v^+$ is
the set of arcs with tail at $v$, and $S_v$ is the \defn{balancing flow} at vertex
$v$.  The sources and sinks of traffic, called \defn{centroids}, are the vertices with non-zero balancing flows; the set of all such centroids we denote $B$.  Because flow is conserved at each vertex, we have $\sum_{\substack{v \in V}}  S_v = 0$.  We assume that while the set $B$ is known, the values of the balancing flows for vertices in $B$ are unknown.

To determine the network flow function $f$, sensors are
placed at various intersections in the road network.  We denote the set of
\defn{monitored vertices} by $M$.  If an intersection is monitored, then the number of cars entering and leaving the intersection along each
road connected to the intersection is revealed.  We denote the set of vertices directly adjacent to vertices in $M$ via an arc in $A$ as $A(M)$.

We finally assume knowledge of the \defn{turning ratios} at every intersection in the
network.   The turning ratio $c_{vu}$ for arc $vu$ at vertex $v$ is simply the percent of incoming traffic to $v$ that leaves along arc $vu$.  That is,
\begin{equation}\label{eqn:turn1}
f_{vu} = c_{vu} \sum_{e \in v^-} f_e.
\end{equation}
Define the \defn{turning factor} of arc $vu$ with respect to given reference arc $vw$ to be the ratio of their turning ratios:
\begin{equation}\label{eqn:turnfact}
\alpha_{vu} = \frac{c_{vu}}{c_{vw}}.
\end{equation}  Then we can write the flow $f_{vu}$ of any outgoing arc $vu$ from $v$ in terms of $f_{vw}$ as
\begin{equation}\label{eqn:turn3}
f_{vu} = \alpha_{vu} f_{vw}.
\end{equation}
The values for the turning ratios can be obtained from historical data about traffic patterns if
available, or can be determined easily by monitoring existing traffic patterns for a short time.

When a set $M$ of vertices is monitored, the flow on all arcs between vertices in $M$ and between $M$ and $A(M)$ are known, as well as the balancing flows at each centroid in $M$.  Applying the turning ratios, we also know the flow on all arcs between vertices in $A(M)$.  We call the set of arcs connecting vertices in $M$ and $A(M)$, on which the flow can be computed directly from monitoring $M$ and applying turning ratios, the \defn{combined cutset} of $M$:
\begin{combcutset}[\cite{odmatrixest}]\label{def:combcutset}
The \defn{combined cutset of $M$}, $C_M$, is the set of arcs in the subgraph
of $G$ induced by $M \cup A(M)$.
\end{combcutset}
\noindent As an aside, we can also use the turning ratios to determine outgoing flow from vertices in $A(M)$ to vertices neither in $A(M)$ nor $M$; these arcs are not part of the combined cutset, but will be used later.

We are now ready to define the Sensor Location Problem (SLP):

\begin{slpdef}[{Sensor Location Problem, \cite{odmatrixest}}]
\label{def:slp}
Given a two-way directed graph $G = (V,A)$, a network flow
function $f$ and a set of centroids $B$, what is the smallest set $M$ of
monitored vertices such that knowledge of all turning ratios, the values of
$f$ on incoming and outgoing arcs of $M$ and balancing flows $S_v$ on $M$ uniquely determines $f$ and the balancing flows $S_v$ everywhere on $G$?
\end{slpdef}

We focus on the verification version of SLP and seek a condition to verify that a proposed set $M$ uniquely determines $f$ and the balancing flows.

\section{A proposed condition and counterexample}\label{sec:slpexs}

We see from the definition of the combined cutset of $M$ that these are arcs over which the problem of determining the flow has already been solved directly from monitoring.  Thus, we can remove $C_M$ from the graph, and try to use the
remaining flow coming out of $A(M)$, turning ratios and flow balance equations to determine the flow everywhere else in the
graph.  We therefore define the \defn{unmonitored subgraph} of $G$ to be the subgraph $G^{'}$ that remains when $C_M$ has been removed from the graph: $G^{'}=(V-M, A-C_M)$.  This subgraph contains all arcs over which the flow is not completely determined by monitoring.

The unmonitored subgraph is often, but not always, disconnected.  We call the $i^{th}$ connected component of the unmonitored subgraph the \defn{$i^{th}$ unmonitored component} and label it $G^{'}_i$.  We label the set of centroids in that
component $B_i$, and the set of (originally) adjacent vertices in
that component $A_i(M)$.  \cite{odmatrixest} present a proof of the
following condition on the set $M$ in order for the flow function $f$ to be uniquely determined.
While this is a necessary condition, we present an example that demonstrates it is not actually sufficient in general.

\begin{badslpthm}[\cite{odmatrixest}]\label{thm:badslp}
Given a set of monitored vertices $M$, the flow on a digraph $G$ can be
uniquely determined everywhere if and only if for every unmonitored component $G_i$ of $G$,
\[|B_i| \leq |A_i(M)|.\]
\end{badslpthm}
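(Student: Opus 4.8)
The plan is to recast the question as a linear-algebra problem about a per-component constraint system and then to analyze when that system has a unique solution. Fix an unmonitored component $G_i'$. Because we know every turning ratio, the flow on each arc leaving a vertex $v$ is, by \eqref{eqn:turn1}, a fixed multiple of the total flow entering $v$; hence the only genuinely free quantities inside $G_i'$ are, at each vertex, a single scalar (say the total incoming flow $T_v$), together with the balancing flow $S_v$ at each centroid in $B_i$. Writing the flow on an internal arc $uv$ as $c_{uv}T_u$ and imposing flow conservation \eqref{eqn:fcl} at every vertex of $G_i'$ then yields a linear system in the unknowns $\{T_v\}$ and $\{S_v\}_{v \in B_i}$, whose inhomogeneous data are precisely the known flows entering $G_i'$ along arcs from $M$ to the boundary vertices $A_i(M)$. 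The flow is determined everywhere exactly when this system has a unique solution, equivalently when its homogeneous version (all boundary inflows set to zero) admits only the trivial solution.

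First I would treat the necessity direction, which I expect to be the more routine half. Here the idea is a dimension count: the balancing flows $\{S_v\}_{v \in B_i}$ act as $|B_i|$ free parameters, while the data that can constrain them are the $|A_i(M)|$ boundary inflows. If $|B_i| > |A_i(M)|$ I would exhibit a nonzero choice of balancing flows that produces identically zero boundary inflow, giving a nontrivial element of the kernel and hence two distinct flows consistent with the same monitoring; this shows that $|B_i| \le |A_i(M)|$ is necessary.

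The sufficiency direction is where I expect the real difficulty, and indeed where I would be most wary. To prove it one must show that whenever $|B_i| \le |A_i(M)|$ the constraint system has full rank, i.e.\ that the linear map sending the interior unknowns and balancing flows to the boundary inflows is injective. The tempting move is to argue that $|A_i(M)|$ boundary constraints suffice to pin down $|B_i|$ balancing-flow unknowns, but a pure cardinality inequality does not by itself guarantee that these constraints are linearly independent over the specific incidence and turning-factor structure of $G_i'$: the rows of the relevant matrix can be dependent even when there are nominally enough of them. Establishing the needed rank bound would therefore require genuinely exploiting the graph structure of $G_i'$ (for instance some acyclicity or tree-like hypothesis), and I would expect a counting argument alone to be insufficient. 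Consequently I anticipate that the ``if'' direction as stated cannot be proved in this generality, and that the gap in any such attempted proof is exactly the place where a counterexample should live.
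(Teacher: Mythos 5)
Your instincts here are exactly right, and they match the paper's own treatment of this statement: Theorem~\ref{thm:badslp} is the erroneous theorem from \cite{odmatrixest} that this paper sets out to refute, so there is no correct proof of the ``if'' direction to compare against. The necessity direction goes through essentially as you describe --- the paper later phrases the same dimension count in terms of the flow calculation matrix $\mathbf{F}$ needing at least as many rows as columns, which forces $|B-M| \le |A(M)|$ componentwise --- and the sufficiency direction fails for precisely the reason you flag: the cardinality inequality does not prevent the flow-balance and turning-ratio constraints from being linearly dependent. The paper makes this concrete with the counterexample of Figure~\ref{fig:slpcex1}: a six-vertex graph with $M=\{a\}$, equal turning ratios, and $|A(M)|=|B-M|=2$, in which the three surviving equations in the unknowns $f_{ed}$, $f_{fd}$, $S_e$ have a coefficient matrix of rank two. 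The only thing missing from your proposal is such an explicit witness; producing one would turn your (correct) suspicion into a refutation. For completeness, the paper then replaces the false sufficiency claim with a stronger necessary condition --- the existence of $|B-M|$ vertex-disjoint $B$-paths, Theorem~\ref{thm:revslp} --- which it shows is also sufficient exactly in the tree-structured setting you anticipated would be needed (Theorem~\ref{thm:treethm}).
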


In their proof of this theorem, the authors compare the number of unknown arc and balancing flow variables to the number of flow balance and turning ratio equations when this condition holds.  They argue (correctly) that the number of equations must be at least the number of unknowns, which happens only if $|B_i| \leq |A_i(M)|$.  However, in their argument that the condition is sufficient, they neglect the possibility that some of the resulting equations might be linearly dependent, and thus the solution will not be unique.

To see this, consider the following example (shown in Figure~\ref{fig:slpcex1}).  Let $\delta^+(u)$ be the outgoing degree of vertex $u$, and suppose that the turning ratios $c_{uv} = 1/\delta^+(u)$ for all arcs $uv$ (the flows on all outgoing arcs from vertex $u$ are equal).
By monitoring vertex
$a$, the unmonitored subgraph $G'$ induced by removing the combined cutset has only a single connected component, consisting of the vertices $b, c, d, e, f$ and the arcs between them.  $A(M)$ in this component is $\{b,d\}$, and $B-M = \{e,f\}$.  Thus $|A(M)| =
|B - M| = 2$, and by Theorem~\ref{thm:badslp}, we should be able to
determine $f$ and the vector $S$ of balancing flows uniquely.

\begin{figure}[h!]
\begin{center}
\includegraphics[scale=0.7]{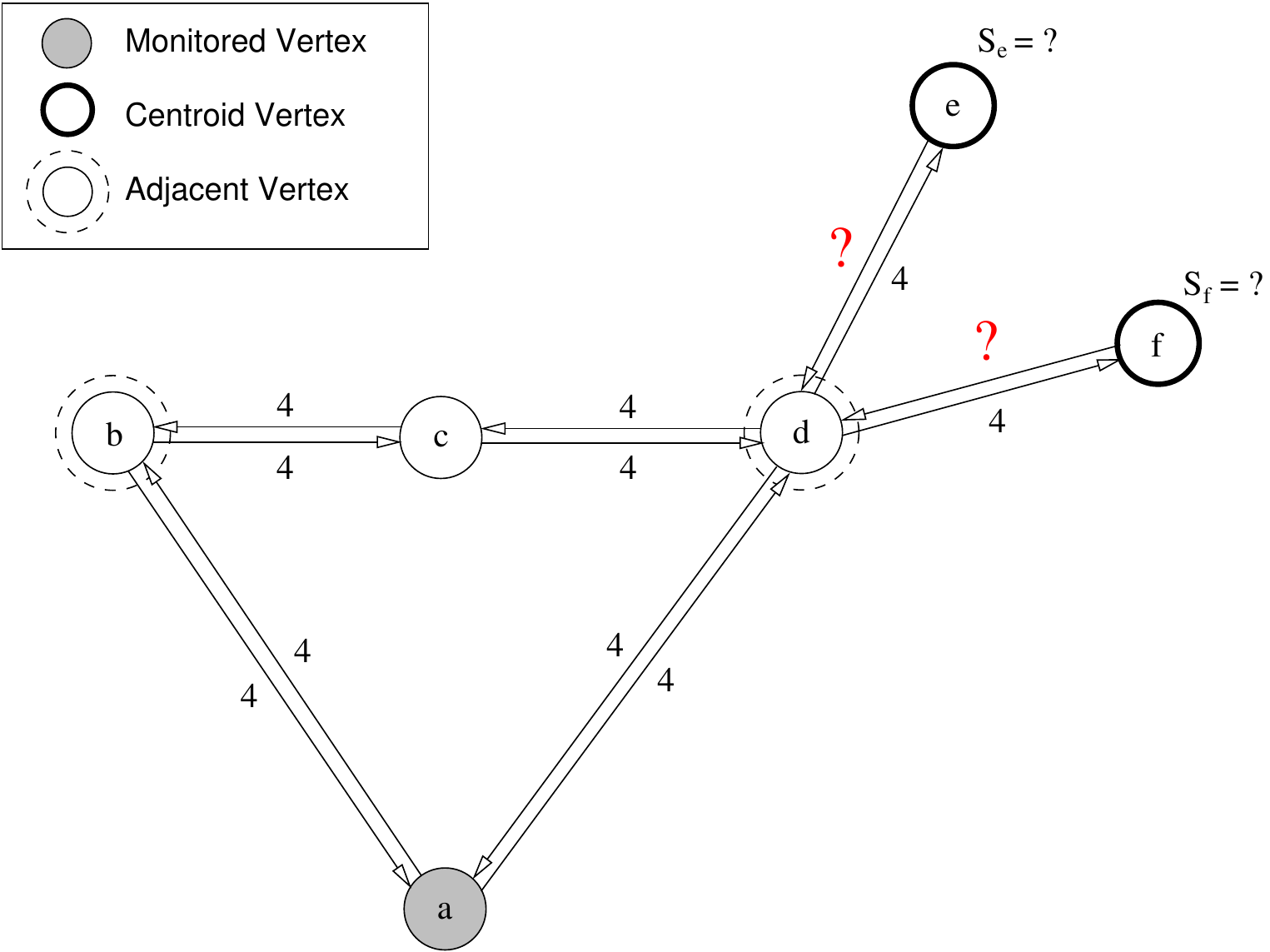}
\Caption{\label{fig:slpcex1}
A counterexample to the flow calculation theorem
(Theorem~\ref{thm:badslp}).  If we monitor vertex $a$ in the above graph, the graph with cutset $C_M = \{ab, ba, ad, da\}$ removed satisfies the conditions in Theorem~\ref{thm:badslp}.
However, we cannot calculate $f_{ed}$ or $f_{fd}$ from the known information.}
\end{center}
\end{figure}

However, suppose we observe $4$ units of flow along arcs $ab,ba,ad,$ and $da$.
 We apply the flow balance equation and knowledge of the turning ratios sequentially at each vertex until we get stuck.  Consider vertex $b$.  It is not a centroid, so $S_b=0$, and since flows on all outgoing arcs are equal, $f_{bc}=f_{ba}=4$.  To preserve balance of flow, $f_{cb}=4$ as well.  By a similar logic, we obtain $f_{cd}=f_{dc}=4$ at vertex $c$ and $f_{de}=f_{df}=4$ at vertex $d$.  We cannot determine $f_{ed}$ and $f_{fd}$ because both $e$ and $f$ are centroids, and their balancing flows are unknown.  Balancing flows in the network must sum to zero, so $S_f = -S_e$, leaving us with the following system of equations having three unknowns and three equations, as predicted by Theorem~\ref{thm:badslp}.  \begin{eqnarray}
\begin{array}{ccccccc}
f_{ed}&+&f_{fd}& & &=& 8\\
f_{ed}& &      &-&S_e&=&4\\
      & &f_{fd}&+&S_e&=&4
\end{array}\end{eqnarray}  Notice, however, that these equations are linearly \textit{dependent} and thus fail to admit a unique solution.  Therefore, the condition provided in Theorem~\ref{thm:badslp} is not sufficient.

Unfortunately, there are many such counterexamples, including cases when the graph is a tree or when the inequality in the theorem is strict.  Fortunately, the subsequent work of \cite{odmatrixest} and \cite{combinatorialSLP} is correct despite the erroneous Theorem~\ref{thm:badslp}.  Nonetheless, it is still valuable to understand why the theorem is incorrect and to formulate a new theorem that guarantees the calculability of traffic flows on a monitored graph.  To better understand the circumstances under which Theorem~\ref{thm:badslp} fails, we next examine the problem via the graph's incidence matrix.

\section{SLP and Invertible Matrices}\label{sec:slpmatrix}
Let $\mathbf{E}$ be the $|V| \times |A|$ incidence matrix where the $(u,e)^{th}$ entry is $-1$ if vertex $u$ is the tail of arc $e$, $1$ if it is $e$'s head, and $0$ if $e$ is not incident
to $u$.  Let $\mathbf{f}$ be the $|A|$-length vector of unknown arc flows and $\mathbf{S}$
the $|V|$-length vector of balancing flows.  The system of linear flow conservation constraints at each vertex then takes the form \begin{equation}\label{eqn:matrix1}
\mathbf{E}\ \mathbf{f} + \mathbf{S} = \mathbf{x},
\end{equation} where $\mathbf{x}=\mathbf{0}$.  Notice that the sum of these equations yields the balancing flow constraint $\sum_{\substack{u \in V}}\mathbf{S}_u = 0$, so we do not need to add this constraint to system (\ref{eqn:matrix1}).

This system does not include the known turning ratios or the observed flow along monitored arcs, and thus
contains more unknown variables than are necessary.  We can therefore reduce this system of equations to a more compact representation, as follows:

\begin{enumerate}
\item For each vertex $u \in V$, we designate an arbitrary outgoing arc $e_u$ to be the \defn{canonical arc} for vertex $u$.  Since we know the turning ratios of the graph, the flow over any arc $uv$ is $f_{uv} = \alpha_{uv}f_{e_u}$.  This reduces the number of flow variables from $|A|$ to $|V|$, and we can modify the unknown flow vector $\mathbf{f}$ to include only the $|V|$ canonical arcs.

\item Having expressed the flow on any arc $uv$ in terms of the flow on $e_u$, the flow balance matrix $\mathbf{E}$ collapses into a square matrix $\mathbf{\hat{E}}$, where row $u$ still corresponds to the balance equation at vertex $u$, and column $v$ corresponds to the canonical arc for vertex $v$, $e_v$.  The $(u,v)^{th}$ entry of $\mathbf{\hat{E}}$ is given by
\[\mathbf{\hat{E}}_{u,v} = \left\{\begin{array}{cl}
\alpha_{vu} & \textrm{ if $u$ and $v$ are connected}\\
-\sum_{\substack{w \mbox{ adjacent to }u}} \alpha_{uw} & \textrm{ if $u=v$}\\
0 & \textrm{ if $u$ and $v$ are not connected}
\end{array}\right.\]

\item\label{list:balflowcol} We also augment $\mathbf{\hat{E}}$ with $|B|$ columns for the unknown balancing flows at the centroids.  The column corresponding to the centroid at vertex $u$ has a $1$ in the $u^{th}$ row and $0$'s everywhere else.  Likewise, we create a single $(|V|+|B|)$-length vector
    $\mathbf{g} = \left[\begin{array}{c} \mathbf{f} \\ \mathbf{S} \end{array} \right]$
    of unknown canonical arc and balancing flows. Equation (\ref{eqn:matrix1}) then becomes
\begin{equation}\label{eqn:matrix2}
\mathbf{\hat{E}}\mathbf{g} = \mathbf{x},
\end{equation} where $\mathbf{x}$ is still the zero vector.

\suspend{enumerate}
We next incorporate the known flow values obtained by monitoring vertices in $M$.

\resume{enumerate}
\item\label{list:removemonrows} For each vertex $m \in M$, the flow along $m$'s canonical arc and the balancing flow (if $m$ is a centroid) are known.  We can remove row $m$ from the matrix $\mathbf{\hat{E}}$.  We also remove column $m$, corresponding to vertex $m$'s canonical arc.  Next, we update the right-hand side vector $\mathbf{x}$ with the known flow values by subtracting $f_{e_m}$ times the removed $m^{th}$ column from $\mathbf{x}$.  This is equivalent to subracting $\alpha_{mu} f_{e_m}$ from the $u^{th}$ entry of $\mathbf{x}$ for each vertex $u$ adjacent to $m$.  If $m$ is a centroid, we also remove the column of $\mathbf{\hat{E}}$ corresponding to its balancing flow.  We likewise remove the entry from $\mathbf{g}$ corresponding to $f_{e_m}$ (and $S_m$ if $m$ is a centroid), and remove the $m^{th}$ entry from $\mathbf{x}$.

 \item\label{list:adjvertrows} For each vertex $a \in A(M)$, the outgoing flow from $a$ to any vertex $m \in M$ is
 monitored, so by turning ratios, we can deduce the flow over $a$'s canonical arc.  We therefore remove column
 $a$ from $\mathbf{\hat{E}}$ and subtract $f_{e_a} = \frac{1}{\alpha_{am}}f_{am}$ times column $a$ from the right-hand
 side vector $\mathbf{x}$.  This is equivalent to subtracting $\alpha_{au}f_{e_a}$ from the $u^{th}$ entry of $\mathbf{x}$ for each vertex $u$ adjacent to $a$ and adding $\sum_{\substack{w \mbox{ adjacent to } a}}\alpha_{aw}f_{e_a}$ to the $a^{th}$ entry of $\mathbf{x}$.  We remove the entry from $\mathbf{g}$ corresponding to $f_{e_a}$.

We name the resulting coefficient matrix for the system of equations the \defn{flow calculation matrix} $\mathbf{F}$ and rewrite for the last time our original system of equations
\begin{equation}\label{eqn:matrix3}
\mathbf{F} \mathbf{g} = \mathbf{x}.
\end{equation}
If equation (\ref{eqn:matrix3}) has a unique solution (which occurs when the columns of $\mathbf{F}$ are linearly independent), then we can uniquely
determine the flow everywhere on the graph.

\end{enumerate}

\begin{figure}[t!]
\begin{center}
\includegraphics[scale=0.7]{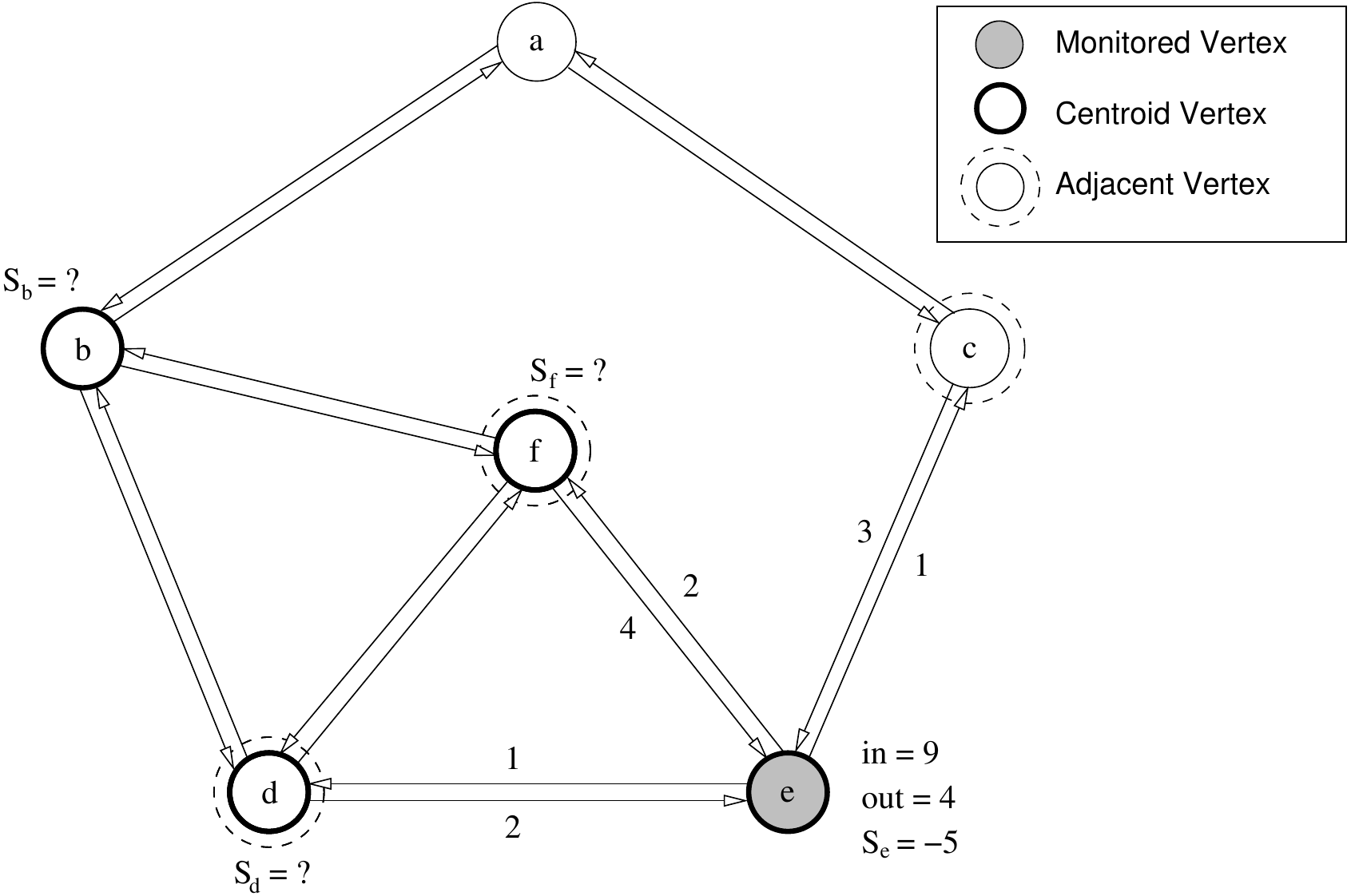}
\Caption{\label{fig:slppenta1} A network in which the set of centroids is $B = \{b,d,e,f\}$, and vertex $e$ is monitored, revealing the flows indicated on the arcs into and out of $e$.  In this case, we can calculate the flow everywhere on the graph, as demonstrated by equation (\ref{eqn:matex}) having a unique solution.}
\end{center}
\end{figure}

For example, consider the graph in Figure~\ref{fig:slppenta1}, with $M = \{e\}$ and flows on monitored arcs as indicated in the figure.  We choose arcs $ab, ba, ca, db, ed$ and $fb$ to be our canonical representatives for each vertex. We also assume that all turning ratios are equal except at vertex
$e$ (so $\alpha_{uv}=1$ for all $uv \neq e$), where monitoring has revealed the turning factors to be $\alpha_{ef} = 2$ and $\alpha_{ec} = 1$.  The corresponding reduced system of equations is:

\begin{equation} \label{eqn:matex}
{\small
\left(\begin{array}{rrrrrrrr}
-2 & 1 &  0 & 0 & 0 \\
1 & -3 &  1 & 0 & 0 \\
1 & 0 &  0 & 0 & 0 \\
0 & 1 &  0 & 1 & 0 \\
0 & 1 &  0 & 0 & 1 \\
\end{array}\right)
\left(\begin{array}{r}
f_{ab} \\ f_{ba} \\ S_b \\ S_d \\ S_f
\end{array}\right) =
\left(\begin{array}{r}
-3 \\ -6 \\ 5 \\ 1 \\ 8
\end{array}\right)}
\end{equation}

It is easy to check that $\rank{\mathbf{F}} = 5$, and
thus the columns are linearly independent; this implies that
equation (\ref{eqn:matrix3}) is solvable for the graph in
Figure~\ref{fig:slppenta1}.

\section{A new necessary condition}\label{sec:neccond}
The flow is uniquely calculable if and only if the matrix $\mathbf{F}$ has full column rank.  An obvious necessary (but insufficient) condition is for $\mathbf{F}$ to have at least as many rows as columns.  $\mathbf{F}$ has $|V|-|M|-|A(M)| + |B - M|$ columns and $|V|-|M|$ rows.  Therefore, we require $|B-M| \leq |A(M)|$.  The necessary condition proved in \cite{odmatrixest} is stronger: $|(B-M)_i| \leq |A(M)_i|$ for all connected components $i$ in the unmonitored subgraph induced by removing the arcs in the combined cutset.  In fact, we can prove an even stronger necessary condition that relies solely on the topology of the graph.  This condition correctly identifies that the example of Figure~\ref{fig:slpcex1} will not yield a unique solution, whereas the original condition $|(B-M)_i| \leq |A(M)_i|$ could not predict this.

Our difficulty in calculating the flow arose when we reached vertex $d$.  Although we knew the flow exiting vertex $d$ along the arcs toward $e$ and $f$, we were unable to determine the flow entering $d$ because both $e$ and $f$ were centroids, contributing unknown balancing flows.  Traffic originating or terminating at vertices $e$ and $f$ got ``mixed up'' at vertex $d$ and could not be uniquely differentiated.

This observation leads us to define a $\mathbf{B}$-path, which we use to correct Theorem~\ref{thm:badslp}.
\begin{bpath}
A \defn{$\mathbf{B}$-path} is a path starting at a centroid and ending
at a vertex in $A(M)$.
\end{bpath}

\noindent This is a similar, but less restrictive, definition than that given for \defn{MB-feasible} paths in \cite{combinatorialSLP}.  Using this definition, we present the following theorem, which provides a stronger necessary condition for flow calculability.

\begin{revslpthm}[Statement A]\label{thm:revslp}
Let $G = (V,A)$ be a two-way directed graph with centroid set $B$, and let
$M$ be a set of monitored vertices.  The flow on arcs in $G$ and the balancing flow at the vertices in $B$ can be
uniquely determined everywhere only
if there exists a set $\mathcal{P}$ of $|B - M|$ vertex disjoint $B$-paths.
\end{revslpthm}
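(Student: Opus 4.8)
The plan is to work through the flow calculation matrix $\mathbf{F}$ of Section~\ref{sec:slpmatrix}, using the equivalence established there that the flow is uniquely determined if and only if $\mathbf{F}$ has full column rank, i.e.\ the homogeneous system $\mathbf{F}\mathbf{g}=\mathbf{0}$ has only the trivial solution. I would prove the contrapositive: if no collection of $|B-M|$ vertex-disjoint $B$-paths exists, then $\mathbf{F}\mathbf{g}=\mathbf{0}$ admits a nonzero solution, so the flow cannot be uniquely determined. Since $G$ is two-way, for connectivity purposes I treat the unmonitored subgraph as undirected.

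The first step is to extract a small separator. I would introduce a super-source $\sigma$ joined to every centroid of $B-M$ and a super-sink $\tau$ joined to every vertex of $A(M)$, so that $B$-paths correspond to paths from $\sigma$ to $\tau$ and vertex-disjoint $B$-paths to internally vertex-disjoint such paths. By the vertex form of Menger's theorem (via the usual vertex-splitting gadget converting vertex-disjointness into arc capacities), the failure to find $|B-M|$ disjoint $B$-paths yields a separator $K\subseteq V-M$ with $|K|\le|B-M|-1$ whose deletion destroys every path from a centroid to $A(M)$.

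The heart of the argument is to localize a null vector using $K$. Let $S$ be the set of vertices of $(V-M)\setminus K$ reachable from some centroid without meeting $K$. The separation property forces $S\cap A(M)=\varnothing$, so $S\subseteq V-M-A(M)$ and every vertex of $S$ carries a canonical arc column; moreover every centroid of $B-M$ lies in $S\cup K$, and since $|K|<|B-M|$ at least one centroid lies in $S$. The crucial structural fact is a closure property: every neighbor of a vertex of $S$ lies in $S\cup K$, and no vertex outside $S\cup K$ has a neighbor in $S$. I would then construct $\mathbf{g}$ supported on $S$: set the canonical-arc entry $f'_v$ to zero for $v\notin S$, and set every balancing-flow entry to zero except those of centroids in $S\cup K$. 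By closure, only the rows of $\mathbf{F}$ indexed by $S$ and by $K$ constrain the surviving entries; rows outside $S\cup K$ hold automatically. A centroid in $K$ contributes a balancing-flow variable appearing in its own row alone, so each such row can be neutralized at the end for free, leaving only $|K|-|(B-M)\cap K|$ binding constraints from the non-centroid vertices of $K$ together with the $|S|$ conservation equations on $S$. The number of free entries is $|S|+|(B-M)\cap S|=|S|+|B-M|-|(B-M)\cap K|$, while the number of binding homogeneous equations is $|S|+|K|-|(B-M)\cap K|$; the surplus is exactly $|B-M|-|K|\ge 1$, so a nonzero solution exists, and extending it by the balancing flows of centroids in $K$ produces a nonzero $\mathbf{g}$ with $\mathbf{F}\mathbf{g}=\mathbf{0}$.

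I expect the main obstacle to be the bookkeeping around Menger's theorem when the source set $B-M$ and the sink set $A(M)$ overlap, i.e.\ a centroid that is itself adjacent to $M$: this forces such a vertex into $K$ and must be tracked carefully through the vertex-splitting gadget. The counterexample of Figure~\ref{fig:slpcex1}, where $K=\{d\}$ and the two centroids $e,f$ on the source side produce the circulating null vector $f'_e=-f'_f$ with $S'_e=f'_e$ and $S'_f=f'_f$, is exactly the smallest instance of this construction and should guide the verification that the localized subsystem genuinely decouples from the rest of $\mathbf{F}$.
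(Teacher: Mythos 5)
Your proposal is correct and follows essentially the same route as the paper: both invoke Menger's theorem to replace the disjoint-path condition by a vertex cut between $B-M$ and $A(M)$, localize to the variables on the $B-M$ side of that cut (your $S$ is the paper's $X_{B-M}\cup V_B$, your $K$ is the paper's $C$), and observe that the only rows that can interact with those columns are the rows of the $B$-side and of the cut itself, so a row-versus-column count forces $|C|\geq|B-M|$. Your explicit null-vector construction (including the neutralization of centroid rows inside the separator) is just the contrapositive phrasing of the paper's argument that the submatrix $\mathbf{F}^{*}$ has too many zero rows to have full column rank.
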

\addtocounter{bpath}{-1}

This is a stronger necessary condition than that given in Theorem~\ref{thm:badslp} because it is not satisfied by our counterexample in Figure~\ref{fig:slpcex1}.  We see in Figure~\ref{fig:ex2paths} that any set of two $B$-paths will be
forced to intersect at vertex $d$.  Thus, the
number of disjoint $B$-paths is smaller than $|B - M|$ and we are unable to
calculate the flow.

\begin{figure}[t!]
\begin{center}
\includegraphics[scale=0.7]{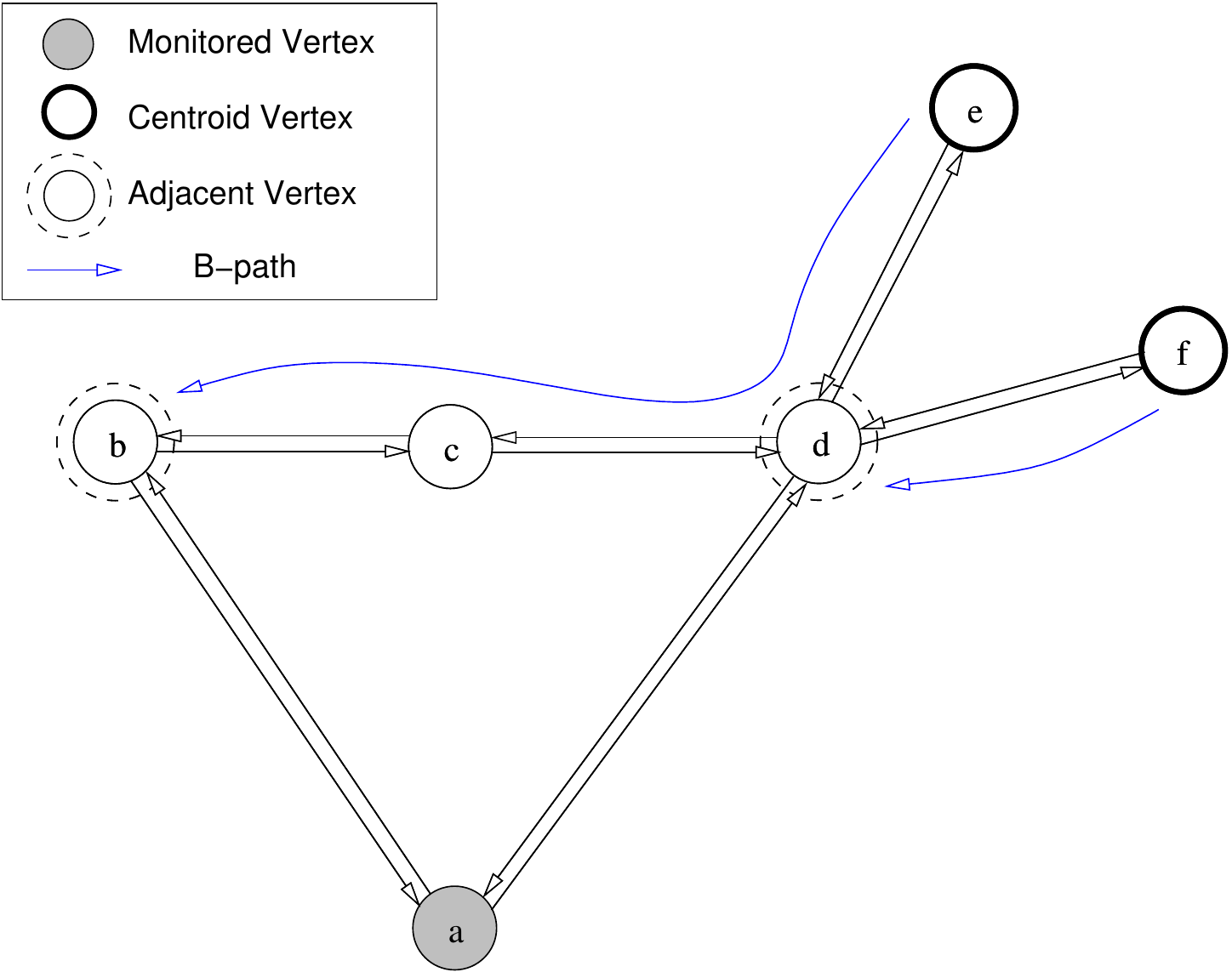}
\Caption[A case in which there does not exist a set of disjoint $B$-paths]
{\label{fig:ex2paths}
The graph from Figure~\ref{fig:slpcex1}), together with a set of
$B$-paths.  However, any two $B$-paths must pass through vertex $d$, so there
is no set of $|B - M|$ disjoint $B$-paths associated with $M$.}
\end{center}
\end{figure}

To prove Theorem~\ref{thm:revslp}, we must translate its statement related to the topological structure of the network into our algebraic framework described earlier.  We note first that the number of vertex-disjoint
$B$-paths cannot be larger than the size of a minimum disconnecting set $C$
between $B-M$ and $A(M)$, by Menger's theorem.  Thus, we require $|C| \geq |B-M|$.  (In fact, the size of the minimum disconnecting set will never strictly exceed $|B-M|$).

Next, we partition the graph $G$ into its unmonitored components by removing the combined cutset.  If $u$ and $v$ are in different partitions in the graph, then there was no path from $u$ to $v$ in $G$ except through $M$ or along an $a_1a_2$ edge for some $a_1$ and $a_2 \in A(M)$.  Because all rows and columns corresponding to $M$ and all columns corresponding to $A(M)$ have been removed from the
matrix, vertex $u$'s flow balance equation will not include any $e_v$ or $S_v$ terms, and $e_u$ and $S_u$ will not appear in vertex $v$'s flow balance equation.  Thus, we can rearrange the flow calculation matrix $\mathbf{F}$ into block form by collecting rows and columns corresponding to vertices in each unmonitored component, and prove the theorem for each component independently.  We rephrase our original theorem accordingly:

\begin{revslpthm}[Statement B]
Let $G, B,$ and $M$ be as in Theorem~\ref{thm:revslp} (Statement A), with the graph partitioned into unmonitored components and the flow
calculation matrix partitioned into blocks as described.  For each unmonitored component $i$, let $C_i$ be the minimum vertex cut between $(B - M)_i$ and $A(M)_i$.  (If $(B-M)_i$ is empty, then let $C = \varnothing$).
$\rank{\mathbf{F}^i} = \#\{\textrm{columns of } \mathbf{F}^i\}$ (and hence the flow on $G^{'}_i$ is calculable) only if $|C_i| = |(B - M)_i|$.
\end{revslpthm}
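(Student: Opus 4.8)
The plan is to prove the contrapositive of Statement B: assuming $|C_i| < |(B-M)_i|$, I will exhibit a nonzero vector in the kernel of $\mathbf{F}^i$, which forces the columns to be linearly dependent and hence $\rank{\mathbf{F}^i} < \#\{\textrm{columns of }\mathbf{F}^i\}$. The first step is to reinterpret kernel vectors concretely. A vector $\mathbf{z}$ with $\mathbf{F}^i\mathbf{z}=\mathbf{0}$ assigns a value $t_u$ to the canonical-arc column of each vertex $u$ of $G'_i$ that lies outside $A(M)_i$, and a value $S_x$ to the balancing-flow column of each centroid $x \in (B-M)_i$. By the entry formula for $\hat{\mathbf{E}}$, the equation in row $u$ states exactly that flow is conserved at $u$ for the homogeneous flow in which every arc $uw$ carries $\alpha_{uw}t_u$, every $A(M)_i$ vertex has zero outflow intensity (its column having been removed), and the only injected balancing flows are the $S_x$. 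Thus a nontrivial kernel vector is a nonzero self-consistent ``phantom'' flow, and producing one proves rank deficiency.

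Second, I would use the cut to impose a block structure. Write $k=|C_i|$ and $b=|(B-M)_i|$; recall from the discussion preceding Statement B that $k\le b$ always, so it suffices to rule out the case $k<b$. Delete $C_i$ from $G'_i$, let $P$ be the union of the connected components of $G'_i - C_i$ that meet $(B-M)_i$, and let $Q$ be the union of the remaining components. Because $C_i$ separates $(B-M)_i$ from $A(M)_i$, one checks directly that $(B-M)_i \subseteq P\cup C_i$, that $A(M)_i \subseteq Q\cup C_i$, and that $G'_i$ has no arc between $P$ and $Q$; in particular any vertex lying in both $(B-M)_i$ and $A(M)_i$ must sit in $C_i$.

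Third comes the construction and the counting, which is the heart of the argument. I set $t_u=0$ for every column vertex in $C_i\cup Q$, leave the $|P|$ values $\{t_p : p\in P\}$ as free variables, and at each centroid $x$ use its balancing-flow column $S_x$ to satisfy that centroid's own conservation equation (legitimate since the $S_x$ column has its only nonzero entry in row $x$). Every conservation equation at a vertex of $Q$ then reduces to $0=0$, every equation at a centroid is absorbed by its $S_x$, and the only genuine constraints on the free vector $(t_p)$ come from the non-centroid vertices of $P\cup C_i$, of which there are $|P\cup C_i|-b=|P|+k-b$. Counting these against the $|P|$ free variables shows the constrained homogeneous system has solution space of dimension at least
\[
|P| - \bigl(|P|+k-b\bigr) = b - k \;\ge\; 1,
\]
so a nonzero admissible $(t_p)$ exists and extends to a nontrivial kernel vector of $\mathbf{F}^i$.

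Finally, the delicate part is the bookkeeping in the third step: correctly arguing that only non-centroid vertices of $P\cup C_i$ impose constraints while every centroid (including those in $(B-M)_i\cap A(M)_i\subseteq C_i$) is free to absorb its own equation, and that setting $t=0$ on $C_i\cup Q$ is consistent with the removed $A(M)_i$ columns. Making the partition rigorous via Menger's theorem and verifying the ``no $P$--$Q$ arc'' block structure underwrite the dimension count; once those are in place, the inequality $b-k\ge 1$ delivers the conclusion.
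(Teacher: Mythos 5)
Your proposal is correct and follows essentially the same route as the paper: both arguments partition the unmonitored component by the minimum vertex cut $C_i$ and perform the identical count (your $|P|$ free canonical-arc values on the $B$-side against $|P|+|C_i|-|(B-M)_i|$ surviving non-centroid conservation rows is exactly the paper's $K \leq R-Z$ count for the submatrix $\mathbf{F}^{*}$, whose zero rows are your automatically-satisfied $Q$-rows and whose balancing-flow columns play the role of your absorbed centroid equations). The only difference is presentational: you prove the contrapositive by exhibiting a nonzero kernel vector, while the paper directly bounds the column rank of $\mathbf{F}^{*}$; by rank--nullity these are the same inequality $|(B-M)_i| \leq |C_i|$.
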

\begin{proof}

For ease of notation, we drop the subscript $i$ and henceforth refer
to all sets in the context of a given unmonitored component $i$.  We assume the
component contains at least one centroid, otherwise the theorem is
true trivially because both $B-M$ and $C$ are empty.  Within
component $i$, we call $V_M$ the set of vertices that are not in $M$
or $A(M)$ and are connected to $M$ by some path that does not pass
through $C$ (i.e. they are on the $M$ side of the cut $C$).
Similarly, we call $V_B$ the set of vertices not in $B - M$ that are
on the $B - M$ side of the cut. Note that $C, A(M),$ and $B - M$
could all overlap, as shown in Figure~\ref{fig:sets1}; we label
these intersections as shown, where $X_{A(M),C} = (A(M) \cap C)
\backslash (B-M)$, $X_{A(M)} = A(M) \backslash (C \cup (B-M))$, etc.
Note that since $C$ is by definition a vertex cut between $B - M$
and $A(M)$, the set $X_{A(M),B-M} = (A(M) \cap (B-M)) \backslash C$
is empty.

\begin{figure}[h!]
\begin{center}
\includegraphics[scale=0.5]{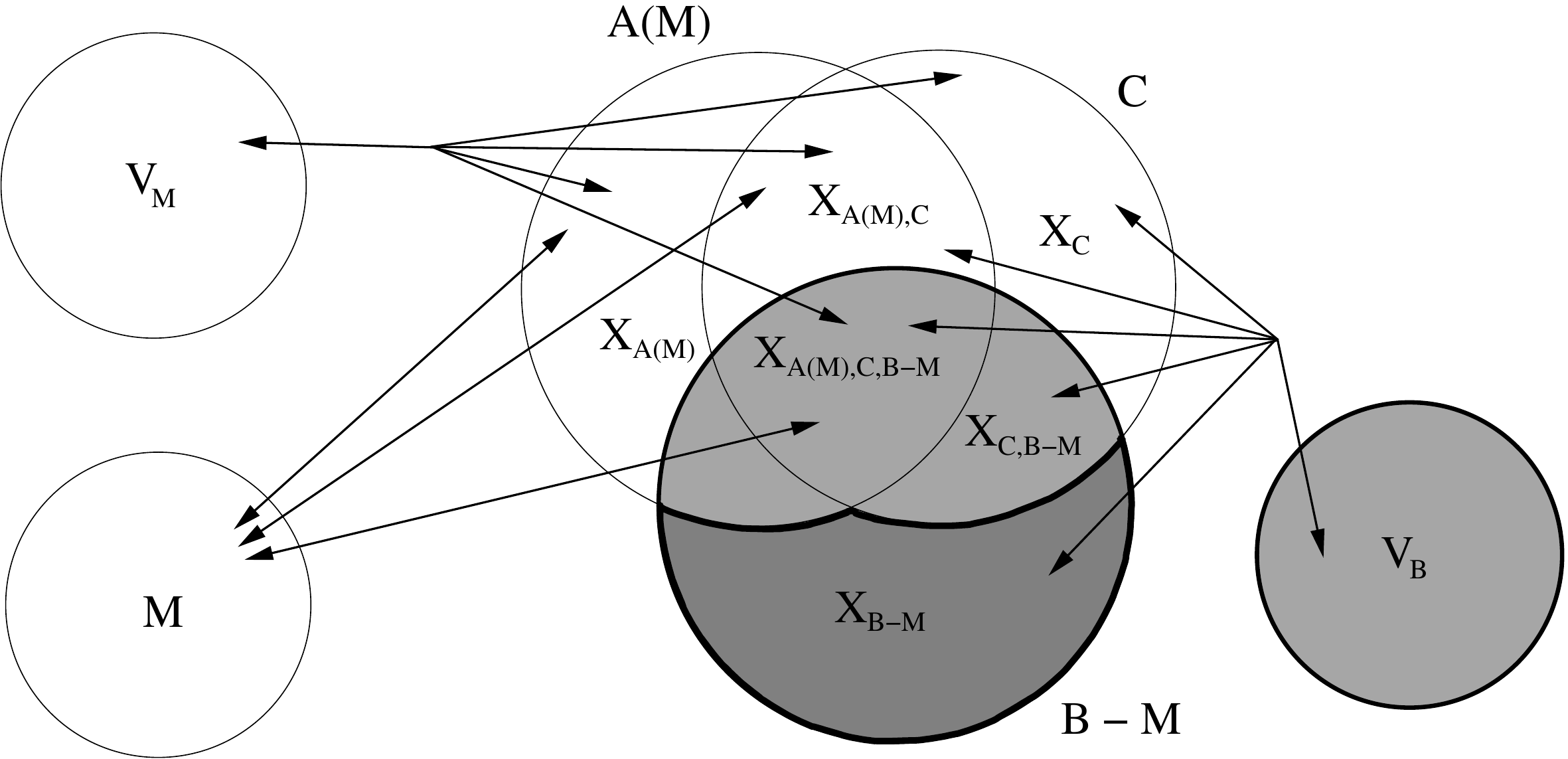}
\Caption[The partition of the vertex set of the
graph]{\label{fig:sets1} The partition of the vertex set for Theorem
\ref{thm:revslp}.  $V_M$ is the set of unaccounted-for vertices on
the $M$ side of the cut, and $V_B$ is the set of unaccounted-for
vertices on the $B - M$ side of the cut.  Bold arrows indicate
possible connections between sets.  Some of these sets may be
empty---in particular, note that by definition, there can be no
vertices in $((B - M) \cap A(M)) \setminus C$, since $C$ must
separate $B - M$ and $A(M)$.  The shaded-in regions correspond to
the columns included in the submatrix $\mathbf{F}^{*}$.}
\end{center}
\end{figure}

Let us consider a submatrix $\mathbf{F}^{*}$ of $\mathbf{F}$ that
contains only the columns corresponding to canonical arcs for
vertices in $X_{B-M}$ and $V_B$ and to balancing flows at vertices
in $X_{B-M}$, $X_{C, B-M}$ and $X_{A(M), C, B-M}$.  These are the
shaded regions of Figure~\ref{fig:sets1}. Since $\mathbf{F}$ has
linearly independent columns, $\mathbf{F}^{*}$ has full column rank, and
\begin{equation}\label{eqn:rowzerocol}
K \leq R - Z,
\end{equation}
where $K$ is the number of columns of $\mathbf{F}^{*}$, $R$ the number of rows and $Z$ the number of zero rows.  By construction,
\[K = |X_{B-M}|+|V_B|+|B-M|\]
and
\[R = |X_{A(M)}|+|X_{A(M),C}|+|X_{A(M),C,B-M}|+|X_{B-M}| + \]
\[|X_C|+|X_{C,B-M}|+|V_M|+|V_B|.\]

Next we determine $Z$.  As we see in Figure~\ref{fig:sets1}, there
are no arcs from vertices in $V_M$ or $X_{A(M)}$ to vertices in
$X_{B-M}$ or $V_B$ by definition of the cut $C$.  Moreover, vertices
in $V_M$ and $X_{A(M)}$ are not centroids.  Therefore, the rows in
$\mathbf{F}^{*}$ corresponding to vertices in $V_M$ or $X_{A(M)}$ are
all zero, and $Z \geq |V_M|+|X_{A(M)}|$.  Applying inequality
(\ref{eqn:rowzerocol}) and canceling common terms, we see that
$|B-M| \leq |X_C| + |X_{A(M), C}| + |X_{C, B-M}| + |X_{A(M), C,
B-M}| = |C|$.  Because $|C|$ can never exceed $|B-M|$, we have $|C|
= |B-M|$.
\end{proof}

As an example, we walk through the construction of the $\mathbf{F}^{*}$-matrix for
our original counterexample of Figure~\ref{fig:slpcex1}.  We start with the flow calculation matrix $\mathbf{F}$ in the system of equations $\mathbf{F} \mathbf{g} = \mathbf{x}$:
\begin{equation}
{\small\begin{array}{rr@{}rrrrr@{}l}
b: & \multirow{5}{*}{$\left(\begin{array}{@{}r}\\\\\\\\\\\end{array}\right.$}
& 1 & 0 & 0 & 0 & 0 &
\multirow{5}{*}{$\left.\begin{array}{@{}l}\\\\\\\\\\\end{array}\right)$}\\
c: & & -2 & 0 & 0 & 0 & 0 &\\
d: & & 1 & 1& 1 & 0 & 0 & \\
e: & & 0 & -1 & 0 & 1 & 0 &\\
f: & & 0 & 0 & -1 & 0 & 1&
\end{array}
\left(\begin{array}{r}
f_{cb} \\ f_{ed} \\ f_{fd} \\ S_e \\ S_f
\end{array}\right) =
\left(\begin{array}{r}
4 \\ -8 \\ 12 \\ -4 \\ -4
\end{array}\right)}\end{equation}

When we remove the cutset associated with monitored vertex $a$, the minimum vertex cut between $A(M) = \{b,d\}$ and $B-M = \{e,f\}$ is $C = \{d\}$.  $|C| \neq |B-M|$, so we should not be able to calculate the flow.  We generate the $\mathbf{F}^{*}$ submatrix using the sets $X_{A(M)}=\{b\}$, $V_M = \{c\}$, $X_{A(M),C} = \{d\}$, and $X_{B-M} = \{e, f\}$:

\begin{equation}
\mathbf{F}^{*} =
{\small\begin{array}{rr@{}rrrr@{}l}
& & ed & fd & S_e & S_f &\\
b: & \multirow{5}{*}{$\left(\begin{array}{@{}r}\\\\\\\\\\\end{array}\right.$}
& 0 & 0 & 0 & 0 &
\multirow{5}{*}{$\left.\begin{array}{@{}l}\\\\\\\\\\\end{array}\right)$}\\
c: & &  0 & 0 & 0 & 0 \\
d: & &  1 & 1 & 0 & 0 \\
e: & & -1 & 0 & 1 & 0\\
f: & & 0 & -1 & 0 & 1
\end{array}}\end{equation}
Notice that the first two rows of the matrix are $0$, which means that the
rank of this submatrix can't be any higher than $3$.  This implies that the
rank of $\mathbf{F}$ cannot equal the number of columns, and thus the flow
on the graph cannot be calculated.

\section{A sufficient condition for trees}
\label{sec:tree}
Next we turn to the question of sufficiency. Unfortunately, the condition is not sufficient for graphs in general, but is sufficient in the case of networks whose unmonitored components are all trees.  Figure~\ref{fig:bpathcex1} provides an example of a general graph in which there are $|B - M|$ vertex-disjoint $B$-paths, but the matrix $E_M$ still does not have linearly independent columns and the flow on the graph cannot be calculated.\begin{figure}
\begin{center}
\includegraphics[scale=0.7]{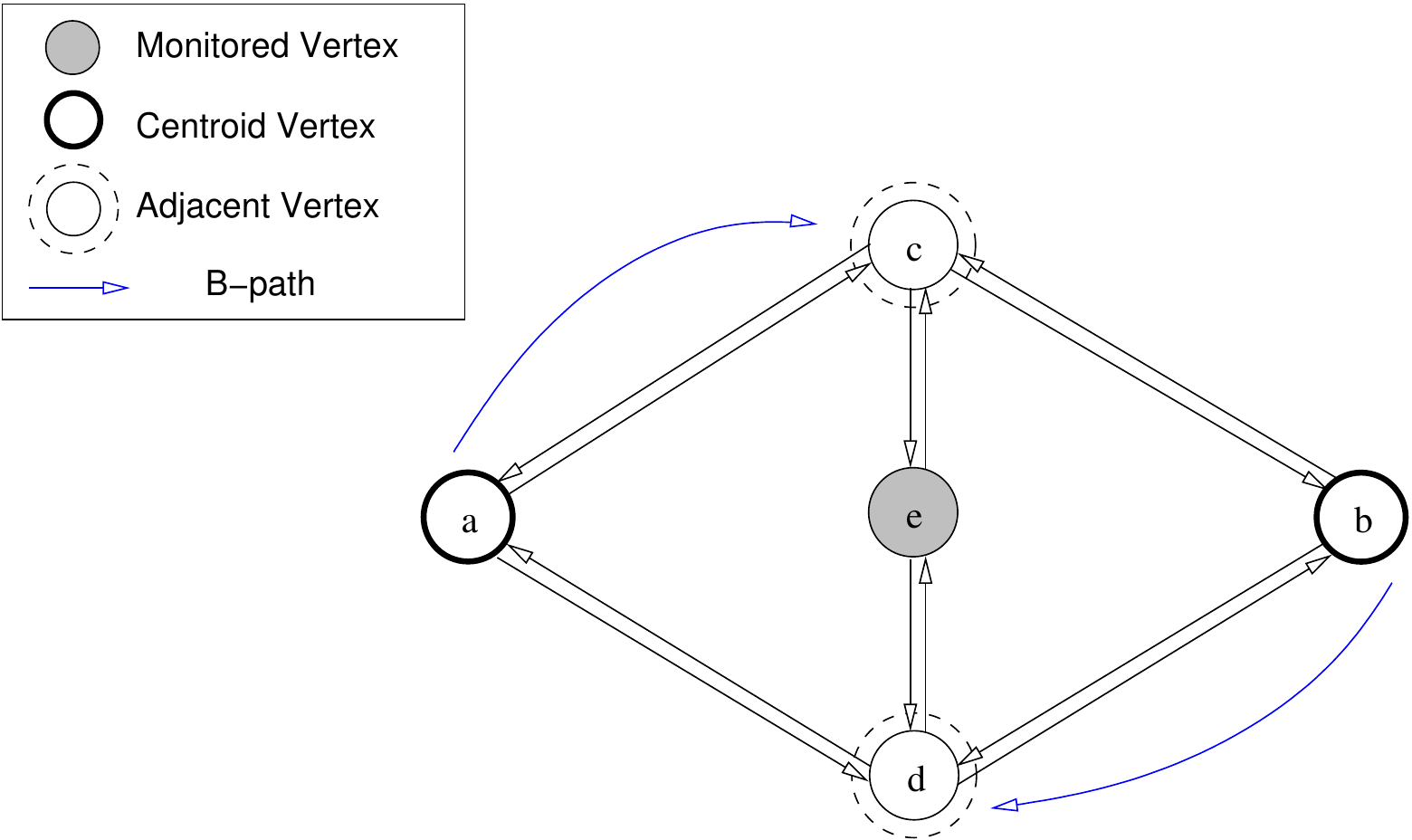}
\Caption[A graph in which we cannot calculate the
flow]{\label{fig:bpathcex1} In this graph, monitoring vertex $e$
creates $2$ disjoint $B$-paths, satisfying the conditions of Theorem~\ref{thm:revslp}.  However, the $\mathbf{F}$ matrix does not
have linearly independent columns, and thus we cannot calculate the flow on the
graph.}
\end{center}
\end{figure}  We see that the unmonitored subgraph of this example, which we obtain by removing $M$'s combined cutset (in this case, the arcs $ce, ec, de,$ and $ed$), is not a tree.  However, the following theorem states that as long as the unmonitored components of a graph are all trees, our condition is sufficient to guarantee the calculability of traffic flow.

\begin{treethm}\label{thm:treethm}
Let $G, B,$ and $M$ be as in Theorem~\ref{thm:revslp}, with the flow
calculation matrix partitioned into blocks as described.  For each unmonitored component $i$,
let $C_i$ be the minimum vertex cut between $(B - M)_i$ and $A(M)_i$.  If the $i^{th}$ component is a tree, then $\rank{\mathbf{F}^i} = \#\{\textrm{columns of } \mathbf{F}^i\}$ (that is, the flow on block $i$ is calculable) if and only if $|C_i| = |(B - M)_i|$.
\end{treethm}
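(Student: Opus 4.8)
The ``only if'' direction is exactly Statement B of Theorem~\ref{thm:revslp}, already established, so the new content is sufficiency: assuming the $i$-th unmonitored component is a tree and $|C_i| = |(B-M)_i|$, I want to show that $\mathbf{F}^i$ has full column rank. I would fix attention on a single tree component and, as in the proof of Statement B, suppress the subscript $i$. My plan is to prove the equivalent statement that the homogeneous system $\mathbf{F}^i \mathbf{g} = \mathbf{0}$ has only the trivial solution, and to do so by exhibiting a nonsingular maximal square submatrix whose structure can be read directly off the tree.

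First I would simplify the matrix. Each balancing-flow column $S_u$ (for $u \in (B-M)_i$) is a standard basis vector with its single $1$ in row $u$, so these columns can be used to clear the rows indexed by centroids. Consequently $\mathbf{F}^i$ has full column rank if and only if the reduced matrix $\mathbf{F}'$ --- whose rows are the \emph{non-centroid} vertices of the component and whose columns are the canonical-arc flows $f_{e_u}$ for vertices $u \notin A(M)_i$ --- has full column rank. The entries of $\mathbf{F}'$ are purely local: in row $v$ the diagonal entry (present only when $v$ is neither a centroid nor in $A(M)_i$) is $-\sum_{w \sim v}\alpha_{vw}$, the entry in a column $u$ adjacent to $v$ is the turning factor $\alpha_{uv}$, and all other entries vanish. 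Thus the support of $\mathbf{F}'$ is governed entirely by tree adjacency.

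Next I would use the hypothesis to build a combinatorial matching. By Menger's theorem the cut condition $|C| = |B-M|$ furnishes $|B-M|$ vertex-disjoint $B$-paths, one starting at each centroid and ending at a distinct vertex of $A(M)$; since the component is a tree these are the unique paths between their endpoints, and I may truncate them so that each meets $A(M)$ only at its terminus and contains no centroid besides its start. Using these paths I would construct a system of distinct representatives for the columns of $\mathbf{F}'$: each interior vertex lying off every path is matched to its own (diagonal) row, while along a path $c \to v_1 \to \cdots \to t$ the column $c$ is matched to row $v_1$, column $v_1$ to row $v_2$, and so on, with the last interior column matched to the row of the $A(M)$-terminus $t$. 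Vertex-disjointness makes this assignment injective, and because the paths absorb exactly the ``extra'' centroid columns into the spare $A(M)$ rows, it is a bijection onto a row set of the correct size, each matched entry being a nonzero turning factor or a nonzero diagonal term. Verifying that the cut condition is precisely what makes this matching exist --- with Hall's condition failing exactly as in Figure~\ref{fig:slpcex1}, where the two centroids funnel through the single cut vertex $d$ --- is one of the two crucial points.

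The remaining and, I expect, the harder point is to show that the square submatrix selected by this matching is genuinely nonsingular, not merely possessed of a nonzero diagonal; that is, that no cancellation occurs in its determinant. Here the tree hypothesis is essential. In the Leibniz expansion a nonzero term corresponds to a bijection $\sigma$ sending each column-vertex to itself or to a tree-neighbor, and on a tree such adjacency-respecting bijections decompose only into fixed points and transpositions across single edges, since any longer cycle would close a cycle in the tree. I would combine this restricted cycle structure with the path-shift pattern of the matching to show that the constructed matching is the unique contributing term (or that all contributing terms share a sign), so the determinant is a nonzero product of turning factors and diagonal entries. This is exactly the step that collapses for a general graph: the cycles in the unmonitored component of Figure~\ref{fig:bpathcex1} permit the cancellation that destroys full rank even when disjoint $B$-paths exist. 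As a fallback I would keep a direct induction that peels leaves of the tree --- an $A(M)$-leaf forces its neighbor's flow to zero, an interior leaf is eliminated like a pendant variable --- arguing that the cut condition is inherited by the smaller tree; the bookkeeping for centroid leaves and for vertices lying in both $A(M)$ and $B-M$ is the cost of that route.
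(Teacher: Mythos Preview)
Your reduction to the matrix $\mathbf{F}'$ and the construction of the matching via the disjoint $B$-paths are both sound, but the determinant step does not go through as you outline it. Neither the ``unique contributing term'' claim nor the ``same sign'' alternative is true in general. Take the tree $a$--$v$--$w_1$--$w_2$ with a centroid $c$ attached at $v$: then $w_1,w_2$ are off every $B$-path and matched to their own diagonal rows, and the Leibniz expansion has two nonzero terms, the identity on $\{w_1,w_2\}$ contributing $(-\Sigma_{w_1})(-\Sigma_{w_2})$ and the transposition contributing $-\alpha_{w_1w_2}\alpha_{w_2w_1}$, of opposite sign. The determinant \emph{is} nonzero here, but only because $\Sigma_{w_1}\Sigma_{w_2}>\alpha_{w_1w_2}\alpha_{w_2w_1}$, a magnitude inequality that your cycle-structure argument does not supply. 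So the tree hypothesis constrains the permutation structure as you say, but does not by itself prevent cancellation; a genuine inequality argument (diagonal dominance, matrix-tree, or similar) would be needed, and you have not indicated one.

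The paper sidesteps this entirely. Rather than analyzing a determinant, it gives a direct flow-propagation argument by induction on $n=|(B-M)_i|$. A short lemma first handles the centroid-free case: on any subtree with no centroids, knowing one incoming flow determines everything. For the inductive step one fixes a pair $(a_1,b_1)$ from the Menger matching and roots the tree at $a_1$; the subtrees of $a_1$ not containing $b_1$ each carry strictly fewer centroid--adjacent pairs, so induction computes their flows, after which one walks along the unique $a_1$--$b_1$ path, using turning ratios for outgoing flows, the inductive hypothesis on side branches (again fewer than $n$ pairs), and flow conservation for the one remaining incoming arc at each step. This is closer in spirit to your fallback leaf-peeling induction than to your primary plan, but inducts on the number of centroids rather than on leaves, which makes the bookkeeping for vertices in $A(M)\cap(B-M)$ and for centroid leaves largely disappear.
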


Prior to proving this theorem, we first prove the following lemma:
\begin{treelemma} \label{thm:treelemma}
Let $G$ be a two-way directed tree with known turning ratios, containing no centroids and having root vertex $r$.  Suppose that $G$ is attached at $r$ to a graph $\hat{G}$ at vertex $v$ with known flow value $f_{vr}$.  Then $f_{rv}=f_{vr}$ and the flow on $G$ can be determined.
\end{treelemma}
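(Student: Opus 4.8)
The plan is to establish both assertions—that $f_{rv}=f_{vr}$ and that every arc flow on $G$ is determined—by induction on the number of vertices of $G$, exploiting the fact that a tree has no cycles so that deleting any single edge splits it into two well-defined pieces. First I would prove a purely topological \emph{edge-balance} fact, and then propagate the turning ratios downward from the root, invoking the inductive hypothesis on each child subtree to finish.

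The key observation is that, since $G$ contains no centroids, the conservation law~(\ref{eqn:fcl}) reduces at every vertex $u \in G$ to $\sum_{e \in u^-} f_e = \sum_{e \in u^+} f_e$. Summing this identity over all vertices of any subtree $T$ of $G$, the contribution of each arc internal to $T$ cancels—it is counted once as an inflow at its head and once as an outflow at its tail—leaving only the arcs crossing the boundary of $T$. Applying this to all of $G$, whose only boundary arcs are $vr$ and $rv$, yields $f_{vr}=f_{rv}$, the first claim. More generally, applying it to the subtree hanging below any child $q$ of a vertex $p$ gives $f_{pq}=f_{qp}$, so flow is balanced across every edge of $G$.

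With edge-balance in hand I would determine the flows top-down. Let $I_r$ denote the total flow entering the root $r$, so that the incoming arcs are $vr$ together with $qr$ for each child $q$, giving $I_r = f_{vr} + \sum_q f_{qr}$. Writing each outgoing flow as $f_{rq}=c_{rq}I_r$ via~(\ref{eqn:turn1}) and using edge-balance to replace each returning flow $f_{qr}$ by $f_{rq}$, the balance equation at $r$ collapses to $I_r = f_{vr} + \bigl(\sum_q c_{rq}\bigr)I_r$. Since the turning ratios at $r$ sum to one over all outgoing arcs—namely $c_{rv}$ together with the $c_{rq}$—this simplifies to $c_{rv}I_r = f_{vr}$, pinning down $I_r$ and hence every flow leaving $r$, including the inflow $f_{rq}$ to each child subtree. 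Each subtree $G_q$ rooted at a child $q$ is itself a centroid-free tree attached to the rest of the graph only through the now-known flow $f_{rq}$, so the lemma applies to $G_q$ by induction, determining all of its flows. The base case is a single-vertex tree $G=\{r\}$, where conservation at $r$ directly gives $f_{rv}=f_{vr}$ and there are no internal arcs to compute.

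The main obstacle is the apparent circularity in the flow equations: the inflow to each child subtree depends on $I_r$, while $I_r$ depends on the flows returning from those subtrees. The edge-balance fact is precisely what breaks this circle, letting returning flows be rewritten as outgoing ones so that $I_r$ can be solved for in closed form. The one technical caveat is that recovering $I_r$ from $c_{rv}I_r=f_{vr}$ requires $c_{rv}\neq 0$; I would invoke the standing assumption that all turning ratios are strictly positive—already implicit in the turning-factor definition~(\ref{eqn:turnfact}), which divides by a reference turning ratio—so that this step, and the analogous divisions at every vertex of $G$, are justified.
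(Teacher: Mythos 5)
Your proof is correct and follows essentially the same route as the paper's: induction on the number of vertices, establishing $f_{rv}=f_{vr}$ from flow conservation over the whole centroid-free tree, and then recursing into the child subtrees with their now-known incoming flows. The paper merely asserts the edge-balance fact and propagates via turning factors $\alpha_{ru}=c_{ru}/c_{rv}$ applied directly to the known $f_{rv}$ rather than solving $c_{rv}I_r=f_{vr}$ for the total inflow, but these are cosmetic differences; your explicit cancellation argument and the caveat that $c_{rv}\neq 0$ only make the same argument more careful.
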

\begin{proof}
We prove this by induction on the number $n$ of vertices in $G$.  As a base case, suppose $n=1$, then $G$ contains only the leaf node $r$.  Because $r$ is not a centroid, its balancing flow is zero, so $f_{rv}=f_{vr}$, and the flow on $G$ has been determined.

Suppose the statement is true for any tree of size strictly less than $n$ and let $G$ have size $n$. Consider the vertex $r \in G$ which is attached to graph $\hat{G}$ at vertex $v$.  Because $G$ is a tree with no centroids, flow is conserved, so $f_{rv}=f_{vr}$, and all other outgoing flows of $r$ can be determined using turning ratios.  Moreover, $r$ is connected to $\mbox{deg}(r)$ subtrees of $G$ each of size strictly less than $n$ and having root vertices $v_i, i=1...\mbox{deg}(r)$ with known incoming flow values $f_{rv_i}$.  So the flow on each subtree can also be determined, and $f_{v_ir}=f_{rv_i}$.  We have therefore found the flow on the entire graph.
\end{proof}

Thus, on a subtree with no centroids, the flow can be computed knowing only a single incoming arc to the tree.
We now prove Theorem~\ref{thm:treethm}.

\begin{proof}
Theorem~\ref{thm:revslp} handles the necessary condition.  Let $n = |(B - M)_i|$.  Because $|C_i| = |(B-M)_i|$, $|A(M)_i|$ must also be at least $n$, and there exists a pairing between a subset of $A(M)_i$ and the vertices in $(B-M)_i$ such that the set of paths between all pairs $a_i \in A(M)_i$ and $b_i \in (B-M)_i$ are vertex disjoint.  (If $|A(M)_i| > n$, then the ``extra'' vertices in $A(M)_i$ will act like centroids with \textit{known} balancing flow equal to the incoming flow from $M$ and can be treated like any other non-centroid.)  We will induct on $n$ to show how to propagate the flow calculation through the graph.

\begin{figure}[h!]
\begin{center}
\includegraphics[scale=0.75]{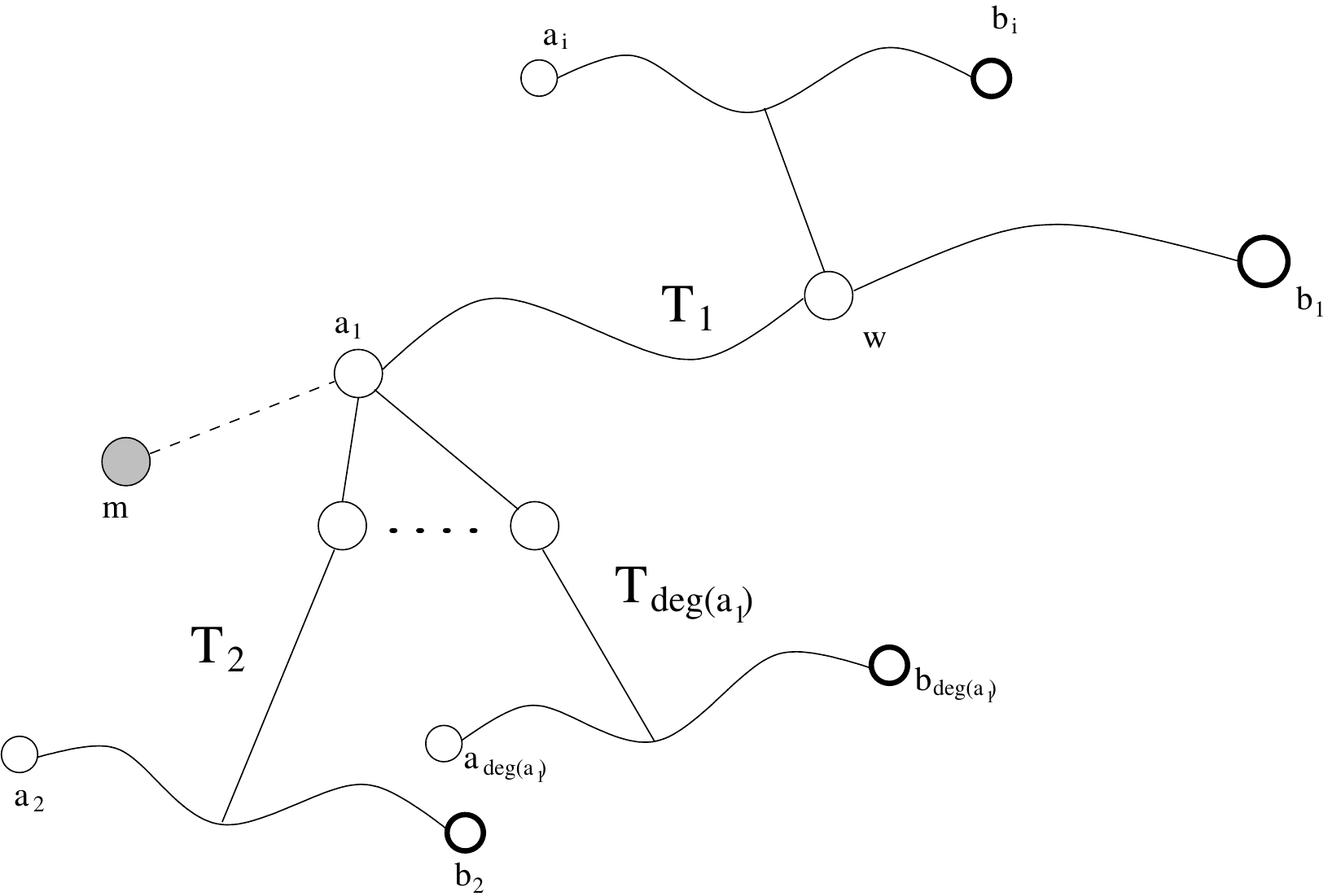}
\Caption{\label{fig:treefig} The tree structure induced by the pairing of centroids and adjacent
vertices in Theorem~\ref{thm:treethm}.  Note that no path from a vertex $a_j \in A(M)$ to its pair
$b_j \in B - M$ can cross any other vertex in $A(M)$.  This allows us to treat each subtree
separately when calculating the flow along it.}
\end{center}
\end{figure}

If $n=0$, then our partition consists of a tree having no centroids that, in the original graph, is connected to a vertex in $M$.  The flow along an incoming arc to $G^{'}_i$ from $M$ is known due to monitoring, so our Lemma~\ref{thm:treelemma} tells us that the flow along every arc in $G^{'}_i$ can be determined.

Suppose the theorem is true for any partition having $|(B-M)_i|<n$ unmonitored centroids, and consider a partition having $|(B-M)_i|=n$.  Without loss of generality, consider centroid $b_1$ and its matching adjacent vertex $a_1$.  Let $T_1, ..., T_{deg(a_1)}$ be the subtrees of $T$ rooted at $a_1$'s neighbors $t_1, ..., t_{deg(a_1)}$, and assume that $T_1 \cup \{a_1\}$ is the tree containing the $(a_1, b_1)$ pair (See Figure~\ref{fig:treefig}).  Every subtree maintains the original pairing of vertices in $A(M)_i$ and $(B-M)_i$ because these pairings corresponded to vertex-disjoint paths which could not pass through $a_1$.  Moreover, each subtree $T_j, j \neq 1$ has strictly fewer than $n$ such pairings and satisfies the induction hypothesis.  The flow on these subtrees can be calculated.  It remains to determine the flow on the edges between $a_1$ and $t_j$: $f_{a_1t_j}$ is an outgoing flow from $a_1$ which is known by monitoring and application of turning ratios.  $f_{t_ja_1}$ can be expressed in terms of the flow on $t_j$'s canonical edge.  Thus, there is still a unique solution for the flow on $T_j \cup \{a_1\}$, and all incoming flow to $a_1$ from trees $T_2, ..., T_{deg(a_1)}$ has been determined.

Next we consider the tree $T_1 \cup \{a_1\}$, by propagating flow calculations along the path from $a_1$ to $b_1$.  The outgoing flow from $a_1$ along the path is known by applying turning ratios.  If $a_1 = b_1$, then in fact $T_1$ is empty.  $a_1$'s incoming and outgoing flows can be used to find its balancing flow, and the flow on $G^{'}_i$ has been completely determined.  If $a_1 \neq b_1$, the incoming edge to $a_1$ from the next vertex on the path to $b_1$ is the only edge incident to $a_1$ whose flow has not yet been determined; it can be determined by flow conservation. By a similar logic, we can propagate these flow calculations along the path from $a_1$ to $b_1$ until we reach the first vertex $w$ having degree greater than 2.  All outgoing flows of $w$ are known by applying turning ratios to the known outgoing flow from $w$ to the vertex preceding $w$ in the path.  Each branch of $w$ that does not contain $b_1$ is once again a subtree that maintains the (strictly fewer than $n$) original $(B-M)_i$ and $A(M)_i$ pairings.  By our induction hypothesis, the flow on this branch can be determined and by the same reasoning as above, the flow on the edges between $w$ and the branch can also be determined.  Flow conservation determines the incoming flow to $w$ from the next vertex on the path to $b_1$.  We continue this way until vertex $b_1$ is reached.  All outgoing flows from $b_1$ are determined by applying turning ratios to the known outgoing flow from $b_1$ into the vertex preceding $b_1$ in the path.  The flow on branches stemming from $b_1$ can be determined by our induction hypothesis, and the balancing flow at $b_1$ is simply the difference between all outgoing and incoming flows at $b_1$.  The flow on the tree has been calculated. \end{proof}

An obvious corollary is the following:

\begin{treecor}
If $G$ is a two-way directed tree with centroid set $B$ and monitored vertex set $M$, the flow on $G$ can be calculated if and only if there exist at least $|B-M|$ vertex-disjoint paths between $A(M)$ and $B-M$.
\end{treecor}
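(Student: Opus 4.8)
The plan is to obtain the corollary as a direct consequence of Theorem~\ref{thm:treethm}, via two reductions: first from the whole tree $G$ to its unmonitored components, and then from the family of per-component cut conditions to the single global statement about vertex-disjoint paths. Since $G$ is a tree, deleting the vertices of $M$ together with the arcs of the combined cutset leaves a forest, so every unmonitored component $G'_i$ is itself a tree and Theorem~\ref{thm:treethm} applies verbatim to each: the flow on $G'_i$ is calculable if and only if $|C_i| = |(B-M)_i|$. Because the flow calculation matrix is block diagonal across components (as established in the discussion preceding Statement B), the flow on all of $G$ is calculable if and only if it is calculable on every component, that is, if and only if $|C_i| = |(B-M)_i|$ for every $i$.

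Next I would translate this family of per-component conditions into the single path condition of the corollary. The key observation is that any path between a vertex of $A(M)$ and a vertex of $B-M$ lies entirely within one unmonitored component, so a maximum family of vertex-disjoint such paths in $G$ splits into maximum families within the individual components. By Menger's theorem the largest such family inside component $i$ has size $|C_i|$, whence the maximum number of vertex-disjoint paths between $A(M)$ and $B-M$ in all of $G$ equals $\sum_i |C_i|$. Since each such path consumes a distinct vertex of $B-M$, we always have $|C_i| \le |(B-M)_i|$, and therefore $\sum_i |C_i| \le \sum_i |(B-M)_i| = |B-M|$.

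To close the loop I would argue both directions at once. The hypothesis that there exist at least $|B-M|$ vertex-disjoint paths between $A(M)$ and $B-M$ reads $\sum_i |C_i| \ge |B-M|$; combined with the reverse inequality just obtained, this forces $\sum_i |C_i| = |B-M|$, and then the termwise bounds $|C_i| \le |(B-M)_i|$ together with $\sum_i |(B-M)_i| = |B-M|$ force $|C_i| = |(B-M)_i|$ in each component. Conversely, if $|C_i| = |(B-M)_i|$ for all $i$, summing produces exactly $|B-M|$ disjoint paths. The global path condition is thus equivalent to the family of per-component cut conditions, which by the first step is equivalent to calculability on $G$. I expect the one genuinely delicate step to be this additivity argument --- verifying that disjoint paths between $A(M)$ and $B-M$ cannot cross component boundaries and that ``at least $|B-M|$'' collapses to ``exactly $|B-M|$'' precisely because no $|C_i|$ can strictly exceed $|(B-M)_i|$; the remainder is a direct appeal to Theorem~\ref{thm:treethm} and Menger's theorem.
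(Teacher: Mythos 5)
Your argument is correct, and it is essentially the argument the paper intends: the paper offers no written proof, presenting this as an ``obvious corollary'' of Theorem~\ref{thm:treethm}, and your writeup is exactly the expected expansion --- each unmonitored component of a tree is a tree, the block structure reduces calculability to the per-component condition $|C_i|=|(B-M)_i|$, and Menger's theorem plus the bound $|C_i|\leq|(B-M)_i|$ converts that family of conditions into the single global count of vertex-disjoint paths. No substantive differences to report.
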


This suggests that it is the presence of cycles in unmonitored subgraphs that can occasionally lead to difficulties in uniquely determining the flow.

\section{Applications to traffic sensor placement} \label{sec:examples}
While it might at first seem restrictive to require the unmonitored subgraph to be a tree in order to guarantee flow calculability, we show in this section that this is not the case.  In fact, a broad collection of road networks can have trees as unmonitored subgraphs.  Moreover, even for those networks whose unmonitored subgraphs contain cycles, our condition can still provide useful information about the placement of traffic sensors.

Consider the traffic network shown in Figure~\ref{fig:grid2}.
\begin{figure}[ht]
\centering
\subfigure[Original road network]{
\includegraphics[height=2.4in]{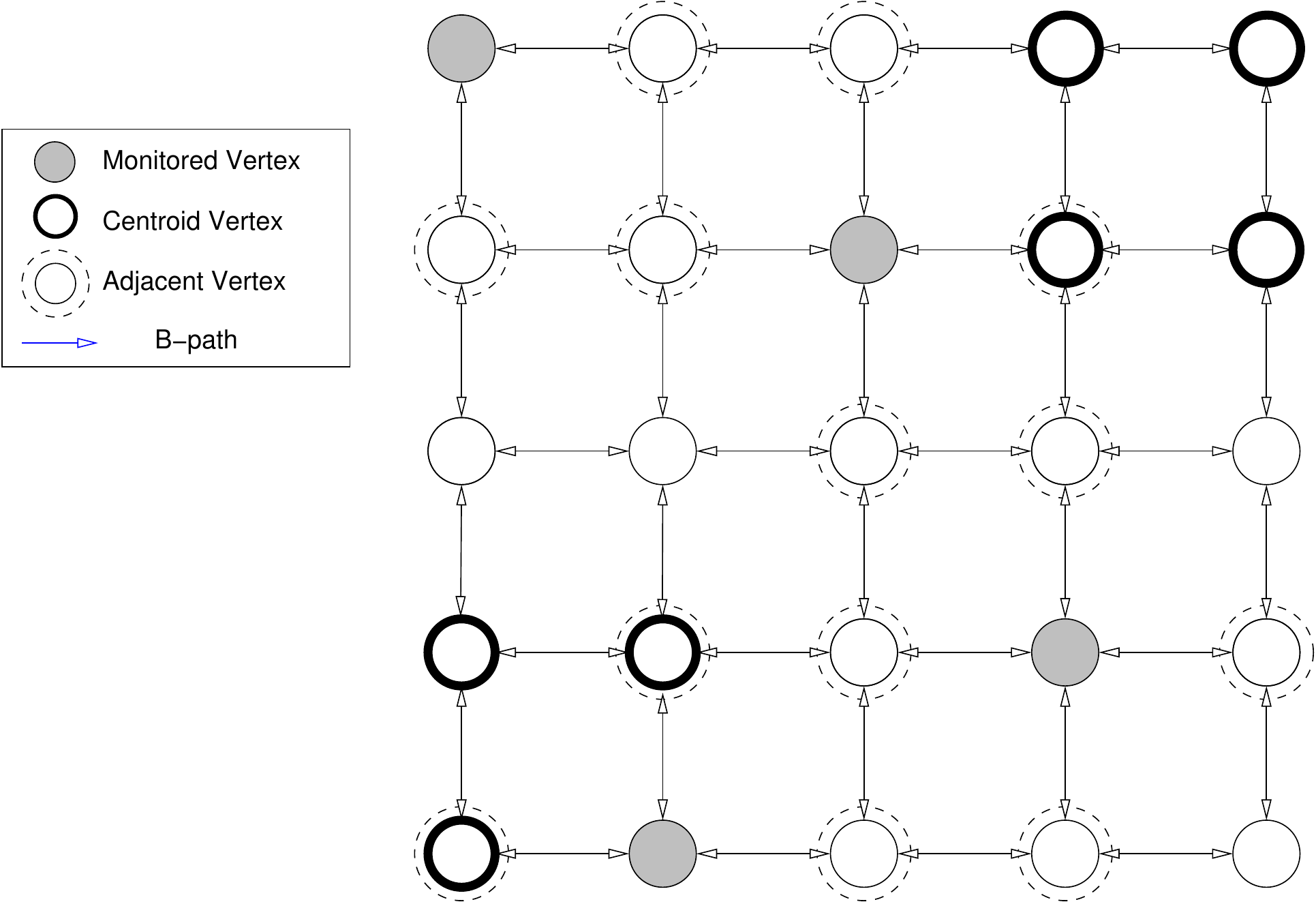}
\label{fig:grid2}
}
\subfigure[Unmonitored subgraph obtained by removing the combined cutset $C_M$]{
\includegraphics[height=2.4in]{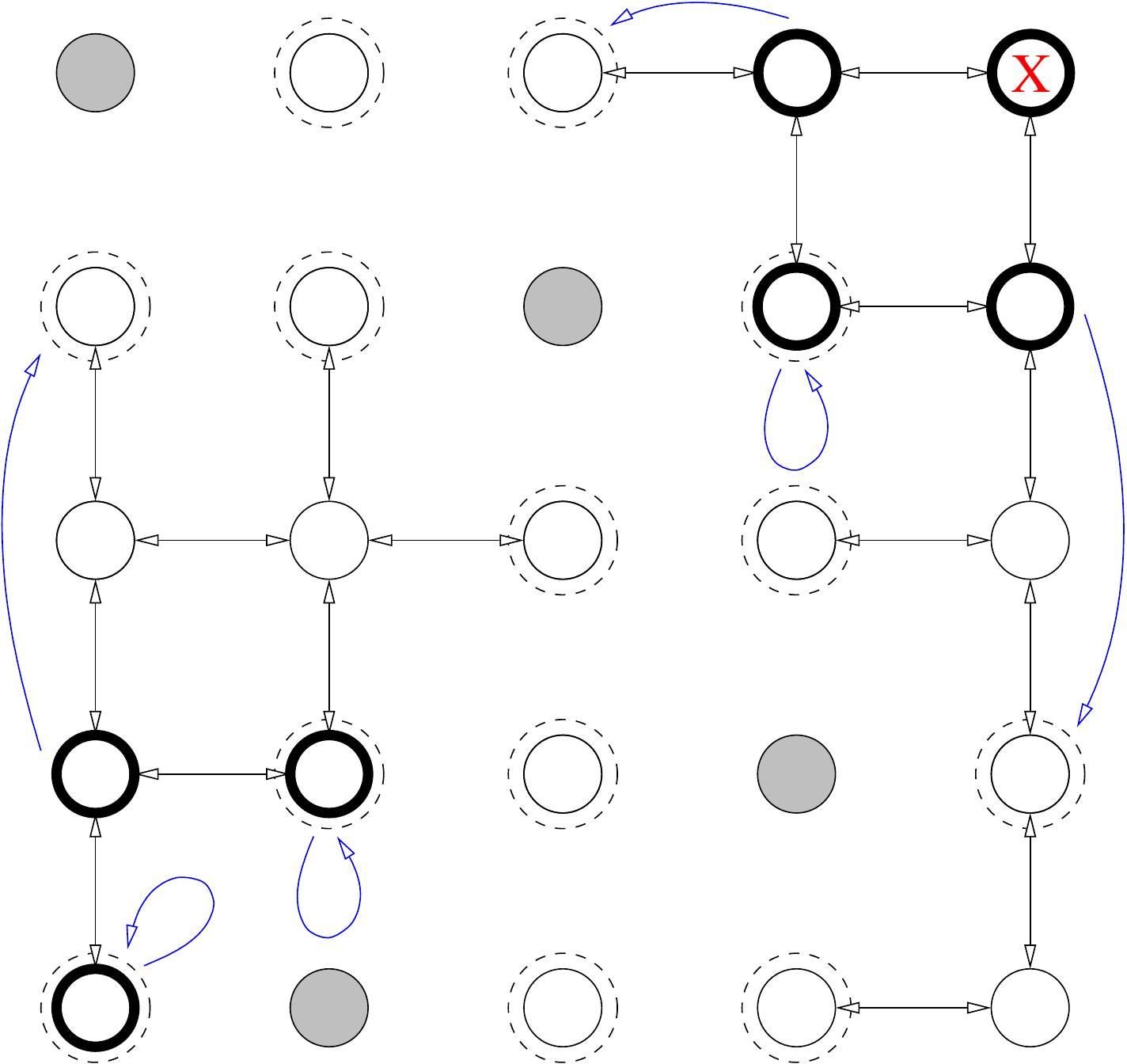}
\label{fig:grid2rem}
}
\caption{\label{fig:grid2Example} A road network with four monitored vertices (shaded) and seven centroid vertices (bold).  The unmonitored subgraph has two components.  The leftmost component has three centroids and five vertices in $A(M)$, satisfying the necessary condition of Theorem~\ref{thm:badslp}.  All centroids have a corresponding B-path, but because this unmonitored component is not a tree, we cannot be certain that the flow is calculable on this region of the network.  The rightmost component has four centroids and five vertices in $A(M)$, satisfying the necessary condition of Theorem~\ref{thm:badslp}.   However, the centroid labeled $X$ does not have its own B-path in this unmonitored component, and hence the traffic flow is not calculable in this region of the network, by Theorem~\ref{thm:revslp} (Statement A).}
\end{figure}
This is a traditional grid network found in many cities, and it has twenty-five intersections, of which seven are considered to be centroids.  A traffic planner might be interested in monitoring four intersections on the network to calculate the traffic flow throughout it.  If she places the monitors on the four vertices shaded in Figure~\ref{fig:grid2}, then the unmonitored subgraph is as shown in Figure~\ref{fig:grid2rem}.  The condition of Theorem~\ref{thm:badslp} that $|B-M|_i \leq |A(M)_i|$ is satisfied on both unmonitored components, but the rightmost component violates our necessary condition of Theorem~\ref{thm:revslp} (Statement A) because the centroid marked by $X$ does not have its own B-path.  Therefore, we are able to conclude that flow is not calculable in this region of the network.  The leftmost unmonitored component satisfies our necessary condition, but because this component is not a tree, we are unable to conclude that the flow is calculable in this region.

If the traffic planner simply rearranges two of the monitored vertices, as shown in Figure~\ref{fig:grid3Example}, she is easily able to construct a network whose unmonitored subgraph is a tree.  The sufficient condition of Theorem~\ref{thm:treethm} is satisfied, and she can conclude that by placing the monitors in this orientation, the traffic flow on the network will be completely calculable.
\begin{figure}[ht]
\centering
\subfigure[Original road network]{
\includegraphics[height=2.5in]{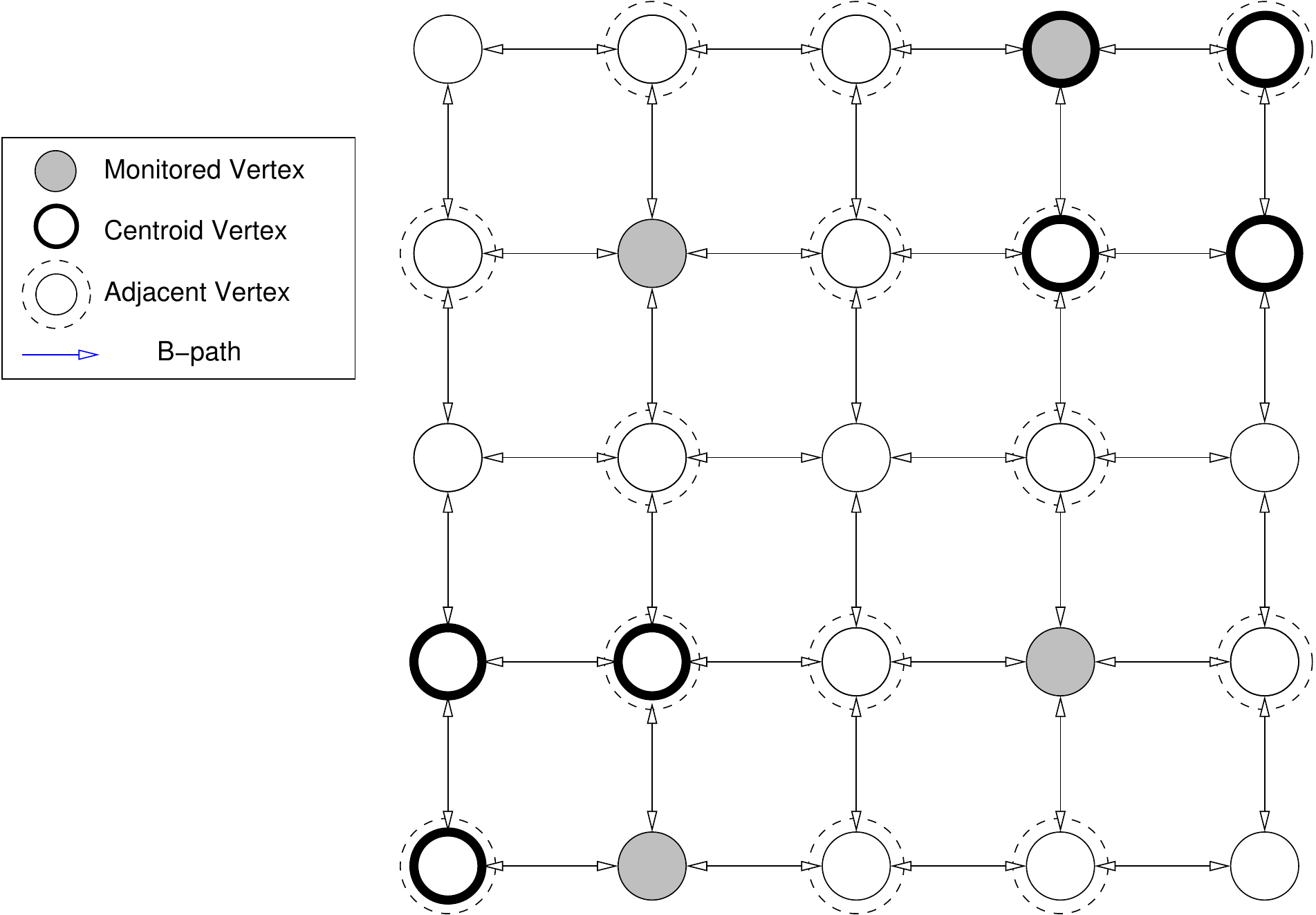}
\label{fig:grid1}
}
\subfigure[Unmonitored subgraph obtained by removing the combined cutset $C_M$]{
\includegraphics[height=2.5in]{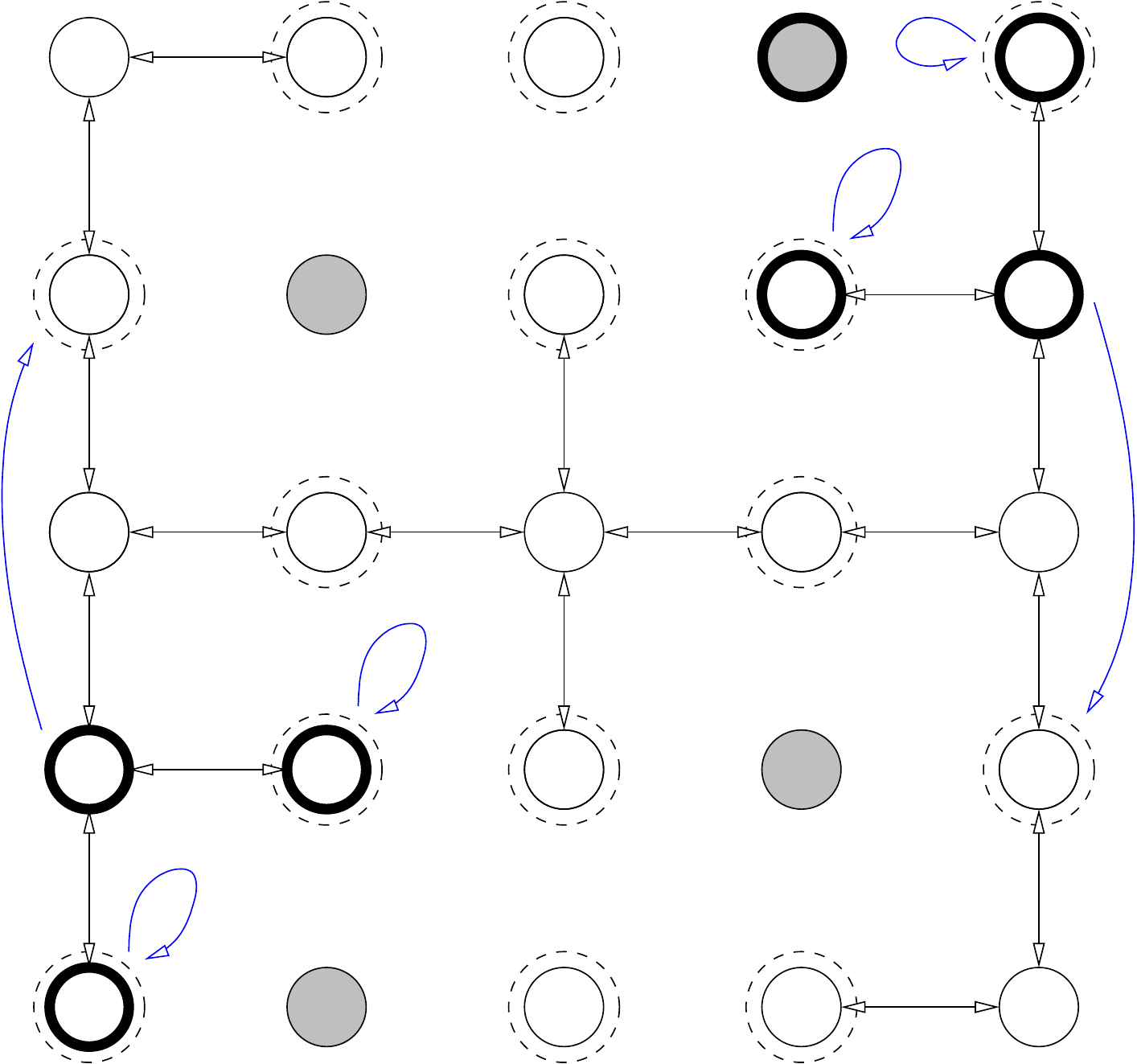}
\label{fig:grid3rem}
}
\caption{\label{fig:grid3Example}
A road network with four monitored vertices (shaded) and seven centroid vertices (bold).  This network is identical to that of Figure~\ref{fig:grid2Example} except that two of the monitored vertices have changed position.  In this modified graph, the unmonitored subgraph is a tree, and every centroid has its own B-path.  Thus, Theorem~\ref{thm:treethm} guarantees the traffic flow is fully calculable on this network.}
\end{figure}

It is worth pointing out that unmonitored subgraphs might be trees even on very large graphs with only a small fraction of monitored vertices.  We considered an $18 \times 18$ grid network (324 vertices) on which 72 vertices were monitored and all unmonitored subgraphs were trees satisfying the sufficient condition for flow calculability given by Theorem~\ref{thm:treethm}.  More generally, it could be expanded to a $3k \times 3k$ graph for any integer $k$ having $12k$ monitored vertices.  Many more examples of large traffic networks can be found for which Theorem~\ref{thm:treethm} applies.  And as we have seen in Figure~\ref{fig:grid2Example}, even when the unmonitored subgraph is not a tree, failure to satisfy the necessary condition of Theorem~\ref{thm:revslp} signals the need either to increase the number of sensors, or to rearrange their positions until the flow is calculable.

\section{Conclusions}\label{sec:conc}
We have corrected an error in an earlier theorem regarding when a proposed set of vertices $M$ in a two-way directed graph is a valid monitoring set for determining the flow on the entire graph.  The topological insights of our counterexample led us to prove a new, stronger, necessary condition that is also sufficient on any unmonitored subgraph of the network that is a tree.  We then showed by example the broad array of networks in which this condition could be used by traffic engineers to determine the placement of traffic sensors.

\bibliographystyle{hmcmath}
\bibliography{thesis}

\begin{thebibliography}{2}
\providecommand{\natexlab}[1]{#1}
\providecommand{\url}[1]{\texttt{#1}}
\providecommand{\urlprefix}{URL }
\expandafter\ifx\csname urlstyle\endcsname\relax
  \providecommand{\doi}[1]{doi:\discretionary{}{}{}#1}\else
  \providecommand{\doi}{doi:\discretionary{}{}{}\begingroup
  \urlstyle{rm}\Url}\fi
\providecommand{\selectlanguage}[1]{\relax}
\providecommand{\eprint}[2][]{\url{#2}}

\bibitem[{Bianco et~al.(2006)Bianco, Confessore, and
  Gentili}]{combinatorialSLP}
Bianco, Lucio, Giuseppe Confessore, and Monica Gentili. 2006.
\newblock Combinatorial aspects of the sensor location problem.
\newblock \emph{Annals of Operations Research} 144(1):201--234.

\bibitem[{Bianco et~al.(2001)Bianco, Confessore, and Reverberi}]{odmatrixest}
Bianco, Lucio, Giuseppe Confessore, and Pierfrancesco Reverberi. 2001.
\newblock A network based model for traffic sensor location with implications
  on {O/D} matrix estimates.
\newblock \emph{Transportation Science} 35(1):50--60.

\end{thebibliography}

\end{document}